\title[The Euler-alignment system]{Asymptotic behaviors for the compressible Euler system with nonlinear velocity alignment}
\author[McKenzie Black]{McKenzie Black}
\address[McKenzie Black]{\newline Department of Mathematics, \ 
 University of South Carolina, 1523 Greene St., Columbia, SC 29208, USA}
\email{mmblack@email.sc.edu}
\author[Changhui Tan]{Changhui Tan}
\address[Changhui Tan]{\newline Department of Mathematics, \ 
 University of South Carolina, 1523 Greene St., Columbia, SC 29208, USA}
\email{tan@math.sc.edu}
\thanks{\textit{Acknowledgment.} This work has been supported by the NSF grant DMS 2108264 and a UofSC SPARC grant.}
\subjclass[2010]{35B40,\,35B06,\,35Q31,\,35R11}
\keywords{Euler-alignment system, nonlinear velocity alignment, flocking, asymptotic behavior, invariant region}
\newtheorem{theorem}{Theorem}[section]
\newtheorem{corollary}[theorem]{Corollary}
\newtheorem{proposition}[theorem]{Proposition}
\theoremstyle{definition}
\newtheorem{definition}{Definition}[section]
\theoremstyle{remark}
\newtheorem{remark}{Remark}[section]
\numberwithin{equation}{section}
\def\R{\mathbb{R}}
\def\pa{\partial}
\def\A{\mathbf{A}}
\def\u{\mathbf{u}}
\def\w{\mathbf{w}}
\def\x{\mathbf{x}}
\def\y{\mathbf{y}}
\def\z{\mathbf{z}}
\def\grad{\nabla}
\def\div{\grad\cdot}
\def\phim{\underline{\phi}}
\def\D{\mathcal{D}}
\def\Dbar{\overline{\D}}
\def\Dt{D}
\def\Dtt{\widetilde{D}}
\def\Dtbar{\overline{\Dt}}
\def\Dtb{\underline{\Dt}}
\def\V{\mathcal{V}}
\def\Vt{V}
\def\Vtt{\widetilde{V}}
\def\Vtbar{\overline{\Vt}}
\def\Vb{\underline{\V}}
\def\Vtb{\underline{\Vt}}
\begin{document}
\allowdisplaybreaks

\begin{abstract}
We consider the compressible Euler system with a family of nonlinear velocity alignments. The system is a nonlinear extension of the Euler-alignment system in collective dynamics. We show the asymptotic emergent phenomena of the system: alignment and flocking. Different types of nonlinearity and nonlocal communication protocols are investigated, resulting in a variety of different asymptotic behaviors.  
 \end{abstract}

\maketitle 

\tableofcontents

\section{Introduction}\label{sec:intro}
In this paper, the point of concern is the following pressureless Euler system with alignment interactions
\begin{equation} \label{eq:EAS}
	\begin{cases}
	\pa_t\rho+\nabla\cdot(\rho\u)= 0,\\
    \pa_t(\rho\u)+\div(\rho\u\otimes\u)= \rho\A[\rho,\u],
	\end{cases}
\end{equation}
where the density $\rho : \R^d\times\R_+\to\R$ and the momentum $\rho\u : \R^d\times\R_+\to\R^d$. The nonlocal alignment force $\A[\rho,\u]$ takes the form 
\begin{equation}\label{eq:alignment}
	\A[\rho,\u](\x,t)=\int_{\R^d} \phi(\x-\y)\Phi(\u(\y,t)-\u(\x,t))\rho(\y,t)\,d\y.
\end{equation}
The function $\phi$ is known as the \emph{communication protocol}. It measures the strength of the pairwise alignment interaction. We naturally assume that $\phi$ is radially symmetric and decreasing along the radial direction.

The mapping $\Phi : \R^d\to\R^d$ describes the type of alignment. One typical choice is the linear mapping $\Phi(\z)=\z$. The corresponding system \eqref{eq:EAS}-\eqref{eq:alignment} is known as the pressure-less Euler-alignment system.

\subsection{The Euler-alignment system}
The Euler-alignment system arises as the macroscopic description of the celebrated Cucker-Smale model \cite{cucker2007emergent} for animal flocks
\begin{equation}\label{eq:CuckerSmale}
 \begin{cases}
  	\dot{x}_i=v_i,\\
  	\dot{v}_i=\displaystyle\frac{1}{N}\sum_{j=1}^N\phi(x_i-x_j)(v_j-v_i).
 \end{cases}
\end{equation}
Here $\{x_i,v_i\}_{i=1}^N$ denotes the locations and velocities of the $N$ agents. The Euler-alignment system can be derived from \eqref{eq:CuckerSmale} via a kinetic description, see e.g. \cite{ha2008particle,figalli2018rigorous}.

The Euler-alignment system has been extensively studied in the past decade. The global wellposedness theory has been established for different types of communication protocols. When $\phi$ is bounded and Lipschitz, a \emph{critical threshold phenomenon} was discovered in \cite{tadmor2014critical}: subcritical initial data lead to globally regular solutions, while supercritical initial data lead to finite time shock formations. In one dimension, a sharp threshold condition was found in \cite{carrillo2016critical}; while in higher dimensions, sharp results are only available for uni-directional \cite{lear2022existence} and radial \cite{tan2021eulerian} flows.

Another interesting type of communication protocol is when $\phi$ is \emph{singular}, namely
\begin{equation}\label{eq:singularphi}
 \phi(r) = r^{-\alpha},
\end{equation}
with $\alpha>0$. In particular, when $\phi$ is \emph{strongly singular} with $\alpha=d+2s>d$, the alignment operator is closely related to the fractional Laplacian $(-\Delta)^s$, bringing a regularization effect to the solution. 
In one-dimensional periodic domain, global regularity is proved for all non-vacuous initial data in \cite{shvydkoy2017eulerian} for $s\in[\frac12,1)$ and in \cite{do2018global} for $s\in(0,\frac12)$. The result has been extended to general communication protocols that behave like \eqref{eq:singularphi} near the origin, see e.g. \cite{kiselev2018global,miao2021global}. The effect of the vacuum is discussed in \cite{tan2019singularity,arnaiz2021singularity}.
In higher dimensions, global wellposedness result is only known for small initial data \cite{shvydkoy2019global,danchin2019regular}.

When $\phi$ is \emph{weakly singular} with $\alpha\in(0,d)$, the global behavior is known to be similar to the bounded Lipschitz case. A slightly different critical threshold is obtained in \cite{tan2020euler}. It has been further discussed in \cite{leslie2020lagrangian}. The borderline case $\alpha=d$ is studied in \cite{an2020global}.

We shall mention that another active branch on the wellposedness theory for the Euler-alignment system is to incorporate pressure. See e.g. \cite{choi2019global,constantin2020entropy, tong2020global, chen2021global, bai2022global, tadmor2022swarming}. For more results on the Euler-alignment system, we refer to the recent book by Shvydkoy \cite{shvydkoy2021dynamics}.

\subsection{Alignment and flocking}
The Euler-alignment system exhibits remarkable asymptotic behaviors: \emph{alignment} and \emph{flocking}. These collective behaviors are inherited from the Cucker-Smale model \eqref{eq:CuckerSmale}. The mathematical representation of hydrodynamic alignment and flocking are defined as follows.

Let $(\rho,\u)$ be a solution to the system \eqref{eq:EAS}-\eqref{eq:alignment}. Let us define the spatial diameter $\D$ and velocity diameter $\V$ as follows:
\begin{equation}\label{eq:DV}
  \D(t)=\text{diam}\big(\text{supp} \, \rho(\cdot,t)\big)
  \quad \text{and} \quad
  \V(t)=\underset{\x,\y\in\text{supp}(\rho(\cdot,t))}{\sup }|\u(\x,t)-\u(\y,t)|.
\end{equation}
The long time collective behaviors of the system can be identified from the following two concepts:
\begin{itemize}
 \item[(i).] \emph{Flocking}: spatial diameter is bounded in all time, namely there exists a constant $\Dbar<\infty$ such that
\begin{equation}\label{eq:flocking}
\D(t)\leq\Dbar,\quad\forall~t\geq0.	
\end{equation}
 \item[(ii).] \emph{Alignment}: the asymptotic velocity is a constant, or equivalently, velocity diameter decays to zero
 \begin{equation}\label{eq:alignmentdecay}
  \lim_{t\to\infty}\V(t)=0.
 \end{equation}
\end{itemize}

We say the flocking and alignment are \emph{unconditional} if \eqref{eq:flocking} and \eqref{eq:alignmentdecay} hold for all initial data;
we say the flocking and alignment are \emph{conditional} if whether \eqref{eq:flocking} and \eqref{eq:alignmentdecay} hold depend on initial data: subcritical initial data lead to flocking and alignment, while supercritical initial data lead to no flocking and no alignment.  In addition, we introduce the following new concept.
\begin{definition}[Semi-unconditional flocking and alignment]\label{def:semi}
 We say flocking and alignment are \emph{semi-unconditional} if \eqref{eq:flocking} and \eqref{eq:alignmentdecay} hold for subcritical initial density, and for any initial velocity. More precisely, there exists a subcritical region on $\D_0$, for any $\V_0$, such that \emph{semi-unconditional} if \eqref{eq:flocking} hold.
\end{definition}

The flocking property for the Cucker-Smale model \eqref{eq:CuckerSmale} has been studied in \cite{ha2009simple,motsch2011new}. The same phenomenon is shown for strong solutions to the Euler-alignment system in \cite{tadmor2014critical} (see \cite{leslie2021sticky} for results on weak solutions). Interestingly, the asymptotic behaviors vary for different communication protocols \eqref{eq:singularphi}, particularly on the behaviors of $\phi$ near infinity. 
If $\alpha\in[0,1]$, the communication protocol has a \emph{fat tail}, the solution has unconditional flocking and alignment properties. Moreover, $\Vt(t)$ decays exponentially in time (known as \emph{fast alignment}). If $\alpha>1$, the communication protocol has a \emph{thin tail}, the flocking and alignment are conditional. See Scenario 0 (S0) in Figure \ref{fig:main} for more details. 

The flocking behavior for the Euler-alignment system has been further investigated in \cite{shvydkoy2017eulerian2}. They showed \emph{fast flocking}: density $\rho(\x,t)$ converges to a traveling wave solution $\rho_\infty(\x-t\bar{\u})$ exponentially in time. See \cite{leslie2019structure,lear2022geometric,leslie2021sticky,bai2022global} for more development.
 
\subsection{Nonlinear velocity alignment}
We consider a new family of alignment interactions \eqref{eq:alignment}, where the mapping $\Phi$ takes the form
\begin{equation}\label{eq:Phi}
	\Phi(\z)=|\z|^{p-2}\z.
\end{equation}
In particular when $p=2$, $\Phi$ is linear and the system \eqref{eq:EAS}-\eqref{eq:alignment} reduces to the Euler-alignment system.

The nonlinear velocity alignment was introduced in \cite{ha2010emergent} for the agent-based Cucker-Smale type dynamics. The system \eqref{eq:EAS}-\eqref{eq:alignment} was derived and studied recently in \cite{tadmor2022swarming, lu2022hydrodynamic} as a formal hydrodynamic representation of the agent-based model, named \emph{$p$-alignment hydrodynamics}.

One motivation of considering the nonlinearity in \eqref{eq:Phi} is its natural connection to the fractional $p$-Laplacian 
\begin{equation}
 (-\Delta)_p^s\u(\x)= c_{s,p,d}\, P.V.\int_{\R^d}\frac{\Phi(\u(\x)-\u(\y))}{|\x-\y|^{d+2s}}\,d\y.
\end{equation}
Indeed, if we take a strongly singular communication protocol \eqref{eq:singularphi} with $\alpha=d+2s$ and enforce the density $\rho\equiv1$, then the nonlinear velocity alignment acts like fractional $p$-Laplacian
\[c_{s,p,d}\,\A[1,\u](\x,t)= -(-\Delta)_p^s\u(\x,t).\]
Its nonlocal and nonlinear feature has drawn a lot of attentions lately. The fractional $p$-Laplacian evolution equation
\[\pa_t\u=(-\Delta)_p^s\u\]
has been extensively studied in a recent series of works by V\'asquez \cite{vazquez2016dirichlet,vazquez2020evolution,vazquez2021fractional,vazquez2022growing}.

\subsection{Main results}
In this paper, we study the Euler system \eqref{eq:EAS}-\eqref{eq:alignment} with nonlinear velocity alignment \eqref{eq:Phi}. While the global wellposedness is an interesting problem of its own, our focus here is on the asymptotic behavior of the system.

The focus of this paper is on the asymptotic behavior of the Euler system \eqref{eq:EAS}-\eqref{eq:alignment} with nonlinear velocity alignment \eqref{eq:Phi}. As discovered in \cite{tadmor2022swarming}, the nonlinearity leads to a fruitful of diverse alignment and flocking behaviors. In particular, the convergence of the velocity diameter in \eqref{eq:alignmentdecay} has a polynomial decay in time, in oppose to the linear alignment $p=2$, where the decay rate is exponential.

\renewcommand{\arraystretch}{1.2}
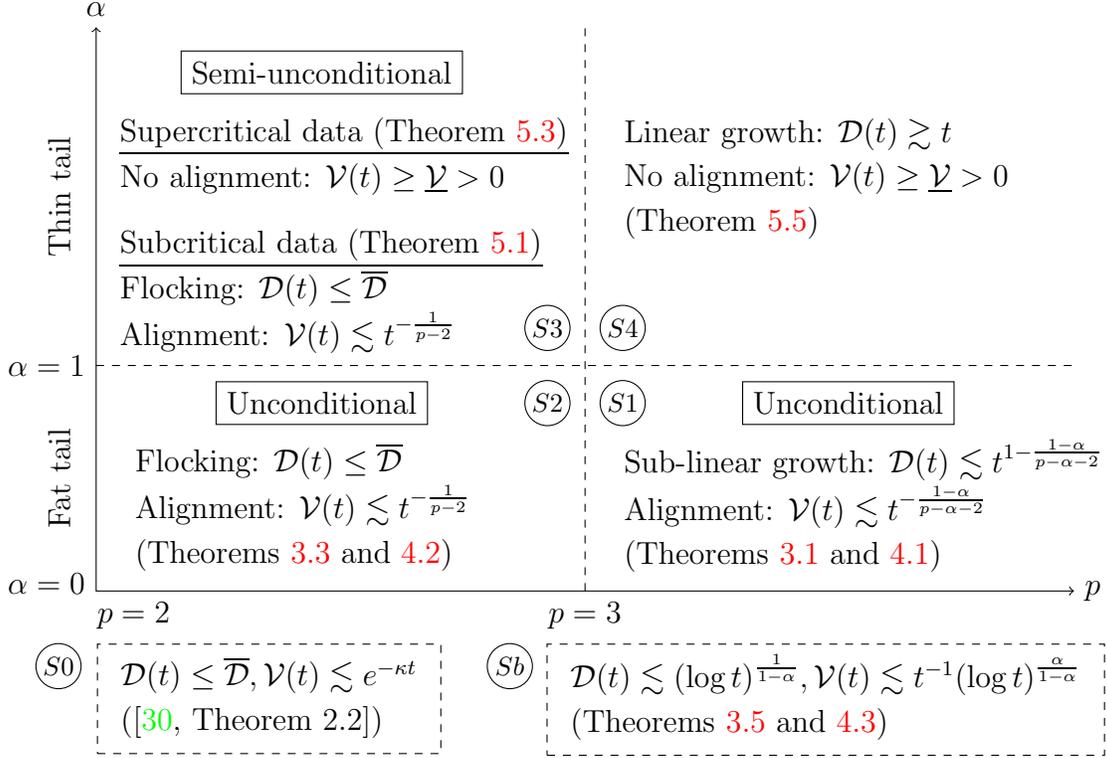
\begin{figure}[ht]
\begin{tikzpicture}
 \draw[<->] (0,7.5) node[above]{$\alpha$} -- (0,0) node[left,yshift=1mm]{$\alpha=0$} -- (13,0) node[right]{$p$};
 \draw[dashed] (0,3) node[left]{$\alpha=1$} -- (13,3);
 \node[below] at (0.5,0) {$p=2$};
 \draw[dashed] (6.5,0) node[below]{$p=3$} -- (6.5,7.5);
 \node[rotate=90, above] at (-.2,1.5) {Fat tail};
 \node[rotate=90, above] at (-.2,5.25) {Thin tail};
 \node[draw, dashed, anchor=north] at (2.3,-.7) {
 $\begin{array}{l}
   \D(t)\leq\Dbar, \V(t)\lesssim e^{-\kappa t}\\
   \text{(\cite[Theorem 2.2]{tadmor2014critical})}
 \end{array}$
 };
 \draw (-.5,-1) circle [radius=0.3] node {\footnotesize$S0$};
 \node[draw] at (3,2.5) {Unconditional};
 \node[anchor=west] at (.2,1.1) {
 $\begin{array}{l}
	\text{Flocking: }\D(t)\leq\Dbar\\
	\text{Alignment: } \V(t)\lesssim t^{-\frac{1}{p-2}}\\
	\text{(Theorems \ref{thm:case2} and \ref{thm:case2o})}
\end{array}$
 };
 \draw (6,2.5) circle [radius=0.3] node {\footnotesize$S2$};
 \node[draw, dashed, anchor=north] at (9.7,-.7) {
 $\begin{array}{l}
   \D(t)\lesssim (\log t)^{\frac{1}{1-\alpha}}, \V(t)\lesssim t^{-1}(\log t)^{\frac{\alpha}{1-\alpha}}\\
   \text{(Theorems \ref{thm:case3} and \ref{thm:case3o})}
 \end{array}$
 };
 \draw (5.5,-1) circle [radius=0.3] node {\footnotesize$Sb$};
 \node[draw] at (10,2.5) {Unconditional};
 \node[anchor=west] at (6.7,1.1) {
 $\begin{array}{l}
	\text{Sub-linear growth: }\D(t)\lesssim t^{1-\frac{1-\alpha}{p-\alpha-2}}\\
	\text{Alignment: } \V(t)\lesssim t^{-\frac{1-\alpha}{p-\alpha-2}}\\
	\text{(Theorems \ref{thm:case1} and \ref{thm:case1o})}
\end{array}$
 }; 
 \draw (7,2.5) circle [radius=0.3] node {\footnotesize$S1$};
 \node[draw] at (3,6.9) {Semi-unconditional};
 \node[anchor=west] at (0,4) {
 $\begin{array}{l}
    \text{\underline{Subcritical data (Theorem \ref{thm:case4})}}\\ 
	\text{Flocking: }\D(t)\leq\Dbar\\
	\text{Alignment: } \V(t)\lesssim t^{-\frac{1}{p-2}}
\end{array}$
 };  
 \node[anchor=west] at (0,5.8) {
 $\begin{array}{l}
	\text{\underline{Supercritical data (Theorem \ref{thm:case4o})}}\\
	\text{No alignment: } \V(t)\geq\Vb>0
\end{array}$
 }; 
 \draw (6,3.5) circle [radius=0.3] node {\footnotesize$S3$};
 \node[anchor=west] at (6.7,5.5) {
 $\begin{array}{l}
 	\text{Linear growth: } \D(t)\gtrsim t\\
	\text{No alignment: } \V(t)\geq\Vb>0\\
	\text{(Theorem \ref{thm:case5o})}
\end{array}$
 }; 
\draw (7,3.5) circle [radius=0.3] node {\footnotesize$S4$};
\end{tikzpicture}
\caption{Gallery of results.} \label{fig:main}
\end{figure}

Figure \ref{fig:main} is a collection of asymptotic behaviors of the system \eqref{eq:EAS}-\eqref{eq:alignment} with different nonlinear velocity alignment parametrized by $p$, and different types of communication protocols parameterized by $\alpha$. Our results are summarized as follows.

\textit{Scenario 1 (S1): $p>3, 0\leq\alpha<1$.}
The system has unconditional alignment \eqref{eq:alignmentdecay} with polynomial decay rate $\frac{1-\alpha}{p-\alpha-2}$. Due to the strong nonlinearity, there is no guaranteed flocking. But the spatial diameter has a sub-linear growth. See Theorem \ref{thm:case1} for detailed descriptions. We further show in Theorem \ref{thm:case1o} that the decay rate on $\V(t)$ and the growth rate on $\D(t)$ are optimal.

\textit{Scenario 2 (S2): $2<p<3, 0\leq\alpha<1$.}
The system has unconditional flocking \eqref{eq:flocking}, and alignment \eqref{eq:alignmentdecay} with polynomial decay rate $\frac{1}{p-2}$ (Theorem \ref{thm:case2}). Moreover, the decay rate on $\V(t)$ is optimal (Theorem \ref{thm:case2o}).

\textit{Borderline Scenario (Sb): $p=3, 0\leq\alpha<1$.} 
The spatial diameter can have a logarithmic growth. This is also a logarithmic correction to the decay on the velocity diameter (Theorem \ref{thm:case3}). The rates are optimal (Theorem \ref{thm:case3o}).

\textit{Scenario 3 (S3): $2<p<3, \alpha>1$.}
The asymptotic behaviors are \emph{conditional}. For subcritical initial data, the system exhibits flocking and alignment with the same rate as in Scenario 2 (Theorem \ref{thm:case4}). On the other hand, there are supercritical initial data that lead to \emph{no alignment}, namely \eqref{eq:alignmentdecay} is violated (Theorem \ref{thm:case4o}). Moreover, we show that the flocking and alignment are \emph{semi-unconditional}.

\textit{Scenario 4 (S4): $p>3, \alpha>1$.}
For any initial spatial and velocity diameters $(\D_0,\V_0)$, regardless of how small they are, we construct initial data that lead to no alignment (Theorem \ref{thm:case5o}). 
\medskip

The polynomial decay in time for the system was discovered by Tadmor in a very recent work \cite{tadmor2022swarming}, covering Scenarios 1 and 2. Our results in Theorems \ref{thm:case1} and \ref{thm:case2} echo with the findings. Moreover, we show that the decay rates are optimal, as well as an explicit logarithmic correction in the borderline case $p=3$.

The flocking behavior of the agent-based Cucker-Smale type dynamics ($2<p<3$) was investigated in \cite{ha2010emergent}, using a smartly chosen Lyapunov functional that was first introduced in \cite{ha2009simple}. This can be applied to Scenarios 2 and 3 in our system. See Section \ref{sec:Lyap} for details of this approach. The asymptotic behaviors for general choices of $p$ was studied in \cite{kim2020complete}. The result seems to depend on the number of agents $N$, and can not be extended to the macroscopic system (with $N\to\infty$).

Our approach makes use of the \emph{method of invariant region}. The idea is to construct an invariant region to the rescaled spatial and velocity diameters, and show that the relevant quantities stay inside the region in all time. Compared with the Lyapunov functional approach, our method can cover the cases when the nonlinearity is strong ($p\geq3$). It can also be used to detect the \emph{no alignment} property.

We would like to highlight our results in Scenario 3. With a thin tail, the system exhibits conditional flocking. This has been proved in \cite{ha2010emergent} using the Lyapunov functional approach. See Theorem \ref{thm:Lyap} for the full description. We show a surprising result that the flocking is \emph{semi-unconditional}: the subcritical region that ensures flocking is independent of the initial velocity. 

Finally, we comment that our results are based on the analysis to a paired inequalities \eqref{eq:ODI}. The framework established by Tadmor in \cite{tadmor2022swarming} works beautifully for general pressure laws. A paired inequalities similar to \eqref{eq:ODI} was derived, using the energy fluctuation $\delta\mathcal{E}$ to replace the velocity diameter $\V$. Thanks to the inequalities on $(\D,\delta\mathcal{E})$, our results can be extended to the compressible Euler system with nonlinear velocity alignment and general choices of pressure.

\subsection{Outline of the paper}
We start with presenting a collection of preliminaries in Section \ref{sec:prelim}, including the derivation of the paired inequalities \eqref{eq:ODI} and some related results in the literature. In Section \ref{sec:fat}, we study the asymptotic behaviors of our system when the communication protocol has a fat tail. This covers the results in Scenarios 1 and 2, as well as the borderline scenario. We then show in Section \ref{sec:sharp} that the quantitative rates of decay or growth that we obtained are sharp. Finally, Section \ref{sec:thin} is devoted to Scenarios 3 and 4, when the communication protocol has a thin tail. In particular, we show semi-unconditional flocking and alignment in Scenario 3.

\section{Preliminaries}\label{sec:prelim}
Let us rewrite our main system equivalently as the evolution of $(\rho,\u)$.
\begin{equation} \label{eq:main}
	\begin{cases}
	\pa_t\rho+\nabla\cdot(\rho\u)= 0,\\
    \pa_t\u+\u\cdot\grad\u = \A[\rho,\u],\\
    \A[\rho,\u](\x,t)=\displaystyle\int_{\R^d} \phi(\x-\y)\Phi(\u(\y,t)-\u(\x,t))\rho(\y,t)\,d\y,\quad
    \Phi(\z)=|\z|^{p-2}\z.
	\end{cases}
\end{equation}

\subsection{The paired inequalities}
We start with the derivation of the following paired ordinary differential inequalities on $(\D,\V)$ that play an important role in the analysis of the asymptotic behavior of our system:
\begin{equation} \label{eq:ODI}
  \begin{cases}
   \D'(t) \leq \V(t),\\
   \V'(t) \leq -C\phi(\D(t))\V(t)^{p-1},
  \end{cases}
	\quad\text{with}\quad
  \begin{cases}
   \D(0)=\D_0,\\
   \V(0)=\V_0.
  \end{cases}
\end{equation}
This type of inequalities was first introduced in \cite{ha2009simple} (with $p=2$), in the context of the agent-based Cucker-Smale model, and in \cite{ha2010emergent} for general $p>1$. Using a similar idea, it was derived for the Euler-alignment system ($p=2$) in \cite{tadmor2014critical}. More recently, Tadmor in \cite{tadmor2022swarming} derived \eqref{eq:ODI} from \eqref{eq:main}, not only for any $p>1$, but also adapted general pressure laws.

For the sake of self-consistency, we present a derivation of \eqref{eq:ODI} for our system \eqref{eq:main}, with general choice of $p\in(1,\infty)$.

\begin{proposition}
 Let $p>1$. Suppose $(\rho,\u)$ is a solution to the system \eqref{eq:main}. Define $(\D,\V)$ as in \eqref{eq:DV}. Then, $(\D(t),\V(t))$ are continuous in time, and the paired inequalities \eqref{eq:ODI} hold almost everywhere in $t$.
\end{proposition}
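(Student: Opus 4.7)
The plan is to derive both inequalities by tracking extremal trajectories along the characteristic flow $\dot\x(t) = \u(\x(t), t)$, which preserves $\text{supp}\,\rho$ via the continuity equation. Continuity (in fact Lipschitz continuity) of $\D$ and $\V$ is immediate from the boundedness of $\u$ and the joint continuity of $\rho$ and $\u$, so it suffices to produce the inequalities in \eqref{eq:ODI} almost everywhere in $t$. For the first inequality, at each $t_0$ pick $\x_1, \x_2 \in \text{supp}\,\rho(\cdot, t_0)$ with $|\x_1 - \x_2| = \D(t_0)$ and let them evolve along characteristics; both remain in $\text{supp}\,\rho$, so $\D(t) \geq |\x_1(t) - \x_2(t)|$ with equality at $t_0$. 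A Danskin-type envelope argument then yields $\D'(t_0) \leq |\u(\x_1) - \u(\x_2)| \leq \V(t_0)$ at every $t_0$ where $\D$ is differentiable.

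For the second inequality, fix $t_0$ and choose $\x_+, \x_-$ attaining $\V(t_0) = |\u(\x_+) - \u(\x_-)|$ on $\text{supp}\,\rho$, and set $\hat{e} := (\u(\x_+) - \u(\x_-))/\V(t_0)$. A key observation is that $\hat{e}\cdot\u(\cdot, t_0)$ attains its maximum over $\text{supp}\,\rho$ at $\x_+$ and its minimum at $\x_-$: indeed, if some $\z \in \text{supp}\,\rho$ had $\hat{e}\cdot\u(\z) > \hat{e}\cdot\u(\x_+)$, then $|\u(\z) - \u(\x_-)| \geq \hat{e}\cdot(\u(\z)-\u(\x_-)) > \V(t_0)$, contradicting maximality. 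Propagating $\x_\pm$ along characteristics and differentiating via the momentum equation in \eqref{eq:main}, a second envelope argument gives
\begin{equation}
	\V'(t_0) \leq \hat{e}\cdot\bigl(\A[\rho,\u](\x_+) - \A[\rho,\u](\x_-)\bigr) \quad \text{a.e. in } t_0.
\end{equation}

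It remains to estimate the right-hand side. For $\y \in \text{supp}\,\rho$ set $s_\pm(\y) := \pm \hat{e}\cdot(\u(\x_\pm) - \u(\y))$; by the extremal property of $\hat{e}$ both are non-negative and satisfy $s_+(\y) + s_-(\y) \equiv \V(t_0)$. Combined with radial monotonicity of $\phi$ (giving $\phi(\x_\pm - \y) \geq \phi(\D(t_0))$ for $\y \in \text{supp}\,\rho$), substituting \eqref{eq:alignment} and \eqref{eq:Phi} yields
\begin{equation}
	\hat{e}\cdot(\A(\x_+) - \A(\x_-)) \leq -\phi(\D(t_0))\int_{\R^d}\bigl[|\u(\y)-\u(\x_+)|^{p-2} s_+(\y) + |\u(\y)-\u(\x_-)|^{p-2} s_-(\y)\bigr]\rho(\y)\,d\y.
\end{equation}
A case analysis on $p$ bounds the bracketed integrand below by $c_p\V(t_0)^{p-1}$: for $p\geq 2$, use $|\u(\y)-\u(\x_\pm)| \geq s_\pm(\y)$ together with the convexity bound $s_+^{p-1}+s_-^{p-1} \geq 2^{2-p}(s_++s_-)^{p-1}$; for $1<p<2$, use instead $|\u(\y)-\u(\x_\pm)| \leq \V$ so that $|\u(\y)-\u(\x_\pm)|^{p-2} \geq \V^{p-2}$ and invoke $s_++s_- \equiv \V$. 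Integrating against $\rho$ and using mass conservation $\int \rho\,d\y = M_0$ gives the claimed bound with $C = c_p M_0$.

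The main anticipated obstacle is the non-smoothness of the argmax maps $t \mapsto \x_\pm(t), \x_1(t), \x_2(t)$, so that the computations above legitimately produce only pointwise bounds on one-sided Dini derivatives; these upgrade to the a.e.\ differential inequalities \eqref{eq:ODI} thanks to the Lipschitz continuity of $\V$ and $\D$. All remaining ingredients -- the radial monotonicity of $\phi$, the sign structure induced by the extremal choice of $\hat{e}$, and mass conservation -- are routine.
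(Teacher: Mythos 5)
Your proof is correct and follows essentially the same route as the paper's: extremal points for $\D$ and $\V$, a Rademacher/Danskin-type differentiation of the suprema, the sign structure coming from extremality combined with the lower bound $\phi\geq\phi(\D(t))$, an elementary convexity estimate producing the factor $\V^{p-1}$, and conservation of mass. The only differences are organizational — you project onto the unit vector $\hat e$ and handle $p\geq2$ and $1<p<2$ separately, whereas the paper differentiates $\V^2$, splits off $\eta=\phi(\D)$ into three terms, and quotes a single ``elementary calculus'' inequality with constant $2^{2-p}$ — and your explicit case analysis is, if anything, the more careful treatment of the range $1<p<2$.
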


\begin{proof}
 Let us fix a time $t$. Let $\z, \w\in\text{supp}(\rho(\cdot,t))$ such that the maximum velocity diameter is attained, namely
\[\V(t)=|\u(\z,t)-\u(\w,t)|.\]
 Clearly, $\grad\u(\w,t)=\grad\u(\z,t)=0$. Applying \eqref{eq:main}$_2$ and Rademacher's Lemma (e.g. \cite[Lemma 3.5]{shvydkoy2021dynamics}), we obtain
\begin{align*}
 \frac{d}{dt} \V(t)^2=&\,2\big(\u(\z,t)-\u(\w,t)\big)\cdot\big(\pa_t\u(\z,t)-\pa_t\u(\w,t)\big)\\
 =&\,2\big(\u(\z,t)-\u(\w,t)\big)\cdot\big(\A[\rho,\u](\z,t)-\A[\rho,\u](\w,t)\big).
\end{align*}
Next, we work on the alignment force. For simplicity, we shall suppress the $t$-dependence throughout the rest of the proof.
\begin{align*}
 	&\A[\rho,\u](\z)-\A[\rho,\u](\w)\\
	=&\int_{\R^d} \phi(\z-\y)\Phi(\u(\y)-\u(\z))\rho(\y)\,d\y-\int_{\R^d} \phi(\w-\y)\Phi(\u(\y)-\u(\w))\rho(\y)\,d\y\\
	=&\int_{\R^d} \big(\phi(\z-\y)-\eta\big)\Phi(\u(\y)-\u(\z))\rho(\y)\,d\y\\
	-&\int_{\R^d} \big(\phi(\w-\y)-\eta\big)\Phi(\u(\y)-\u(\w))\rho(\y)\,d\y\\
	&+\eta\int_{\R^d}\big(\Phi(\u(\y)-\u(\z))-\Phi(\u(\y)-\u(\w))\big)\rho(\y)\,d\y.
\end{align*}
Here, we take $\eta=\phi(\D(t))$ so that $\phi(\z-\y)-\eta>0$ and $\phi(\w-\y)-\eta>0$.

Since $\Phi$ is odd and increasing, and $\w, \z$ are where the maximum is attained, we have
\[\big(\u(\z)-\u(\w)\big)\cdot\Phi(\u(\y)-\u(\z))\leq0,\quad
\big(\u(\z)-\u(\w)\big)\cdot\Phi(\u(\y)-\u(\w))\geq0,\]
for any $\y\in\text{supp}(\rho)$.
Therefore, we have
\[\frac{d}{dt}\V(t)^2\leq 2\big(\u(\z)-\u(\w)\big)\cdot\eta\int_{\R^d}\big(\Phi(\u(\y)-\u(\z))-\Phi(\u(\y)-\u(\w))\big)\rho(\y)\,d\y.\]
Take our $\Phi(\z)=|\z|^{p-2}\z$ in \eqref{eq:Phi}. When $p>1$, elementary calculus implies the following bound
\[
\big(\u(\z)-\u(\w)\big)\cdot\big(\Phi(\u(\y)-\u(\z))-\Phi(\u(\y)-\u(\w))\big)\leq -2^{2-p}|\u(\z)-\u(\w)|^p, 
\]
for any $\y\in\text{supp}(\rho)$, where the equality is achieved when $\y=\frac{\z+\w}{2}$. 
Apply the bound and we get
\[\frac{d}{dt}\V(t)^2\leq -2^{3-p}\eta\,\V(t)^p\int_{\R^d}\rho(\y)\,d\y.\]
Note that the total mass $\int_{\R^d}\rho(\y)\,d\y$ is conserved in time. We conclude with
\[\V'(t)\leq -C\phi(\D(t))\V(t)^{p-1},\quad\text{where}\quad C=-2^{2-p}\int_{\R^d}\rho_0(\y)\,d\y.\]
\end{proof}

\subsection{Global communication}
 One scenario where the global behavior can be easily obtained is when the communication protocol has a positive lower bound, 
\begin{equation}\label{eq:alpha0}
 \phi(r)\geq\phim>0,\quad\forall~r\geq0. 	
\end{equation}
In this case, \eqref{eq:ODI} implies the following results.
\begin{theorem}\label{thm:alpha0}
Let $p>1$ and $\phi$ satisfies \eqref{eq:alpha0}. 
Take any bounded $(\D_0,\V_0)$. Suppose $(\D,\V)$ satisfies \eqref{eq:ODI}. Then, we have
\begin{itemize}
 \item If $1<p<2$, there exists a finite time $T_*$ such that $\lim_{t\to T_*}\V(t)=0$.
 \item If $p=2$, then $\V(t)$ decays to zero exponentially in time, $\V(t)\lesssim e^{-\kappa t}$.
 \item If $p>2$, then $\V(t)$ decays to zero algebraically in time, $\V(t)\lesssim t^{-\beta_*}$, with the decay rate $\beta_*=\frac{1}{p-2}$.
\end{itemize}
Moreover, we have
\begin{itemize}
 \item If $1<p<3$, the solution flocks.
 \item If $p=3$, $\D(t)$ has logarithmic growth in time, $\D(t)\lesssim\log t$.	
 \item If $p>3$, $\D(t)$ has sublinear growth in time, $\D(t)\lesssim t^{1-\beta_*}$.
\end{itemize}
\end{theorem}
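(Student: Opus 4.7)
The plan is to exploit the uniform lower bound \eqref{eq:alpha0} to decouple the system \eqref{eq:ODI}. Substituting $\phi(\D(t))\geq\phim$ into the second inequality gives the closed Bernoulli-type differential inequality
\[
\V'(t)\leq -C\phim\,\V(t)^{p-1},
\]
which I would solve (or rather, bound) explicitly in each of the three ranges of $p$.

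First I would handle the decay of $\V(t)$. For $p=2$, Gr\"onwall's inequality directly yields $\V(t)\leq \V_0 e^{-\kappa t}$ with $\kappa=C\phim$. For $p\neq 2$, dividing by $\V^{p-1}$ and integrating gives
\[
\V(t)^{2-p}-\V_0^{2-p}\geq (p-2)C\phim\, t.
\]
When $p>2$ this leads to $\V(t)\leq \bigl(\V_0^{2-p}+(p-2)C\phim\,t\bigr)^{-1/(p-2)}\lesssim t^{-\beta_*}$ with $\beta_*=1/(p-2)$. When $1<p<2$ the left-hand side is $\V_0^{2-p}-(2-p)C\phim\,t$, which must remain non-negative, forcing $\V$ to vanish at the finite time $T_*=\V_0^{2-p}/\bigl((2-p)C\phim\bigr)$; one has to argue that $\V(t)\to 0$ as $t\uparrow T_*$ rather than merely becoming undefined, which follows because the integrated bound forces $\V(t)\leq\bigl(\V_0^{2-p}-(2-p)C\phim\,t\bigr)^{1/(2-p)}$.

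Next I would insert these bounds into $\D'(t)\leq \V(t)$ and integrate. In the cases $1<p<2$ and $p=2$, the integral $\int_0^\infty \V(s)\,ds$ is finite (in the former case the integrand is $\sim(T_*-s)^{1/(2-p)}$, which is integrable since $1/(2-p)>0$; in the latter it is exponential), giving a uniform bound on $\D$ and hence flocking. For $p>2$ the integral of $s^{-\beta_*}$ from $1$ to $t$ behaves like $t^{1-\beta_*}/(1-\beta_*)$ when $\beta_*\neq 1$ and like $\log t$ when $\beta_*=1$; the critical value $\beta_*=1$ corresponds exactly to $p=3$. This yields sublinear growth $\D(t)\lesssim t^{1-\beta_*}$ for $p>3$, logarithmic growth for $p=3$, and a convergent integral (hence flocking) for $2<p<3$.

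There is no real obstacle here beyond bookkeeping: the single ingredient is that \eqref{eq:alpha0} turns \eqref{eq:ODI}$_2$ into a scalar autonomous inequality. The only minor subtlety is in the $1<p<2$ case, where one must justify that the finite extinction time of the comparison ODE transfers to the inequality; this is standard by comparison, noting that $\V(t)\geq 0$ by definition of a diameter. Once the $\V$-bound is established in each regime, the $\D$-bound is obtained by a direct integration and the threshold values $p=2,3$ emerge naturally as the borderlines between exponential/polynomial decay and between bounded/logarithmic/sublinear growth.
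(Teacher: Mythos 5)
Your proposal is correct and follows essentially the same route as the paper: bound $\phi(\D)$ below by $\phim$ to close \eqref{eq:ODI}$_2$ as a scalar Bernoulli inequality, solve by separation of variables (Gr\"onwall for $p=2$), and then integrate $\D'\leq\V$ to read off flocking versus logarithmic or sublinear growth according to whether $\beta_*>1$, $=1$, or $<1$. The only nitpick is the sign of your displayed integrated inequality, which as written holds for $p>2$ but reverses for $1<p<2$; your subsequent text handles that case correctly, so this is a bookkeeping slip rather than a gap.
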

\begin{proof}
Since $\phi$ is lower bounded, we apply \eqref{eq:ODI}$_2$ and get
\[
 \V'(t)\leq-C\phim\,\V^{p-1}(t).
\]
For $p=2$, we have the exponential decay 
\[\V(t)\leq \V_0 e^{-C\phim\,t}.\]
For $p\neq2$, separation of variable yields
\[\V(t)\leq \big(\V_0^{2-p}-(2-p)C\phim\, t\big)^{\frac{1}{2-p}}.\]
When $p<2$, we have $\V(T_*)=0$ at $T_*=\frac{\V_0^{2-p}}{(2-p)C\phim}$. When $p>2$, we get $\V(t)\lesssim t^{-\frac{1}{p-2}}$.

For $\D(t)$, we plug in the bounds on $\V$ to the integral form of \eqref{eq:ODI}$_1$
\[\D(t)\leq \D(0)+\int_0^t\V(\tau)\,d\tau.\]
When $p<3$, $\int_0^\infty\V(\tau)\,d\tau$ converges and hence $\D(t)$ is bounded uniformly in time.
When $p\geq 3$, we have
\[\int_0^t\V(s)\,ds\lesssim\begin{cases}
 t^{\frac{p-3}{p-2}}&p>3,\\ \log t & p=3.	
\end{cases}
\] 
Therefore, $\D(t)$ has a bound that grows sub-linearly in time. 
\end{proof}

Note that the results hold for any initial data. Therefore, the alignment and flocking properties are \emph{unconditional}.

\subsection{Flocking via Lyapunov functional}\label{sec:Lyap}
A more interesting scenario is when the communication protocol $\phi(r)$ decays to zero as $r\to\infty$. In particular, we consider $\phi(r)\sim r^{-\alpha}$ near infinity with $\alpha>0$, namely there exist positive constants $\lambda<\Lambda$ and $R$ such that
\begin{equation}\label{eq:alpha}
 \lambda r^{-\alpha}\leq \phi(r)\leq \Lambda r^{-\alpha},\quad\forall~r\geq R. 	
\end{equation}
This scenario has been studied in \cite{ha2009simple} when the alignment operator $\A[\rho,\u]$ is linear in $\u$, namely the $p=2$ case. The result has been extended to $2<p<3$ in \cite{ha2010emergent}. The flocking behavior \eqref{eq:flocking} is obtained, by brilliantly introducing a Lyapunov functional 
\begin{equation}\label{eq:Lyapnov}
	\mathcal{E}(t)=\V^{3-p}(t)+(3-p)C\psi(\D(t)),\quad \psi(\D(t)):=\int_{\D_0}^{\D(t)}\phi(r)\,dr.
\end{equation}
One can check that 
\[\mathcal{E}'(t)\leq (3-p)\V^{2-p}\cdot\big(-C\phi(\D(t))\V(t)^{p-1}\big)+(3-p)C\phi(\D(t))\cdot\V(t)=0.\]
This leads to $\mathcal{E}(t)\leq\mathcal{E}(0)$, and in particular
\[(3-p)C\psi(\D(t))\leq \V_0^{3-p}\quad\Rightarrow\quad \D(t)\leq \psi^{-1}\left(\frac{\V_0^{3-p}}{(3-p)C}\right).\] 

If the communication protocol $\phi$ has a fat tail, i.e. $\phi$ is non-integrable at infinity, the range of $\psi$ covers $\R_+$. Hence, $\psi^{-1}$ is well-defined for any $\V_0$. This leads to unconditional flocking.

If the communication protocol $\phi$ has a thin tail, i.e. $\phi$ is integrable at infinity, the range of $\psi$ contains $[0, \int_{\D_0}^\infty\phi(r)\,dr)$. Then flocking is guaranteed if
\[\frac{\V_0^{3-p}}{(3-p)C}<\int_{\D_0}^\infty\phi(r)\,dr.\]

We summarize the results as follows.
\begin{theorem}[{\cite{ha2009simple, ha2010emergent}}]\label{thm:Lyap}
 Let $2\leq p<3$ and $\phi$ satisfies \eqref{eq:alpha}. Suppose $(\D,\V)$ satisfies \eqref{eq:ODI}. Then,
 \begin{itemize}
  \item For fat tail communication $\alpha<1$: for any initial data $(\D_0,\V_0)$, the flocking property \eqref{eq:flocking} holds, with
  	\[\Dbar=\left(\D_0^{1-\alpha}+\frac{1-\alpha}{(3-p)\lambda C}\V_0^{3-p}\right)^{\frac{1}{1-\alpha}}.\]
  \item For thin tail communication $\alpha>1$: the flocking property \eqref{eq:flocking} holds when the initial data $(\D_0,\V_0)$ satisfies
  \begin{equation}\label{eq:Lyapsub}
  	\D_0\V_0^{\frac{3-p}{\alpha-1}}\leq\left(\frac{(3-p)\lambda C}{\alpha-1}\right)^{\frac{1}{\alpha-1}}.
  \end{equation} 
 \end{itemize}
\end{theorem}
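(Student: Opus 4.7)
The plan is to analyze the Lyapunov functional
\[\mathcal{E}(t)=\V(t)^{3-p}+(3-p)C\,\psi(\D(t)),\qquad \psi(r):=\int_{\D_0}^r\phi(s)\,ds.\]
First I will verify that $\mathcal{E}$ is non-increasing almost everywhere. Differentiating and applying \eqref{eq:ODI} gives
\[\mathcal{E}'(t)\le (3-p)\V^{2-p}\bigl(-C\phi(\D)\V^{p-1}\bigr)+(3-p)C\phi(\D)\,\V=0,\]
with the crucial cancellation being that both prefactors involving $3-p$ are non-negative precisely because $2\le p<3$. Integrating from $t=0$ and using $\psi(\D_0)=0$ yields the a priori bound
\[(3-p)C\,\psi(\D(t))\le \V_0^{3-p},\]
and because $\phi>0$ the function $\psi$ is strictly increasing on $[\D_0,\infty)$, so whenever the right-hand side lies in the range of $\psi$ one may invert to obtain $\D(t)\le \psi^{-1}\!\bigl(\V_0^{3-p}/((3-p)C)\bigr)$.

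Next I will convert this abstract bound into the announced explicit estimates by inserting the tail assumption \eqref{eq:alpha}. In the fat tail case $\alpha<1$, the lower bound $\phi(r)\ge \lambda r^{-\alpha}$ (valid for $r\ge R$; for $r\in[\D_0,R]$ one uses that $\phi$ is positive and radially decreasing, which only improves constants and can be absorbed) integrates to
\[\psi(\D(t))\ge \frac{\lambda}{1-\alpha}\bigl(\D(t)^{1-\alpha}-\D_0^{1-\alpha}\bigr).\]
Combining with the a priori bound and solving for $\D(t)$ produces exactly the claimed expression for $\Dbar$, valid for arbitrary initial data and hence giving unconditional flocking.

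For the thin tail case $\alpha>1$, the same inequality integrates to the finite lower bound $\psi(\infty)\ge \lambda\D_0^{1-\alpha}/(\alpha-1)$, and now $\psi^{-1}$ is only defined on the bounded interval $[0,\psi(\infty))$. Hence one needs
\[\frac{\V_0^{3-p}}{(3-p)C}\le \frac{\lambda\,\D_0^{1-\alpha}}{\alpha-1},\]
which after an elementary rearrangement is exactly the subcriticality condition \eqref{eq:Lyapsub}; under it the same inversion of $\psi$ yields a uniform-in-time bound on $\D(t)$. The one genuinely nontrivial ingredient is the design of the Lyapunov functional itself: the sign requirement $3-p\ge 0$ in the cancellation for $\mathcal{E}'$ is precisely what forces the restriction $p<3$ and explains why a fundamentally different strategy (such as the invariant-region method announced later in the introduction) is needed to reach $p\ge 3$; the remaining work is just bookkeeping to invert $\psi$ in the two regimes.
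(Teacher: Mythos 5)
Your proposal is correct and follows essentially the same route as the paper: the monotone Lyapunov functional $\mathcal{E}=\V^{3-p}+(3-p)C\psi(\D)$, the resulting bound $(3-p)C\psi(\D(t))\le\V_0^{3-p}$, and inversion of $\psi$ using the tail assumption \eqref{eq:alpha} to produce the explicit $\Dbar$ in the fat-tail case and the subcriticality condition \eqref{eq:Lyapsub} in the thin-tail case. Your added remarks on the sign constraint $3-p\ge 0$ and on absorbing the region $r\in[\D_0,R]$ are consistent with the paper's (sketched) argument.
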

Once the flocking property is shown, one can apply Theorem \ref{thm:alpha0} with $\phim=\phi(\Dbar)$ and obtain alignment with polynomial decay rate $\frac{1}{p-2}$.

The Lyapunov functional approach is simple and elegant. However, in the case when $\alpha>1$, the sufficient condition \eqref{eq:Lyapsub} depends on $\V_0$. Therefore, the resulting flocking behavior is not \emph{semi-unconditional}. We will show an improved result of semi-unconditional flocking in Theorem \ref{thm:case4}.

 \section{Fat tail communications: Unconditional flocking and alignment}\label{sec:fat}
 In this section, we study the asymptotic behaviors of our system \eqref{eq:main} with fat tail communication protocols $\phi$ that satisfy \eqref{eq:alpha} with $\alpha\in(0,1)$. Our main goal is to analyze the long time behaviors of the paired inequalities \eqref{eq:ODI}.
 
\subsection{Heuristics}
Let us start with a heuristic argument on the asymptotic behaviors of the system. For a simple illustration, we assume the equalities hold in \eqref{eq:ODI}.
 
Suppose $\V(t)\sim t^{-\beta}$ for some $\beta\in(0,1)$. Then, $\D(t)\sim t^{1-\beta}$. The growth of $\D(t)$ will have an effect on the lower bound of $\phi(\D(t))$. Indeed, we have $\phi(\D(t))\V^{p-1}\sim t^{-\alpha(1-\beta)-\beta(p-2)}$. To match the rate of $\V'(t)\sim t^{-\beta-1}$, we should have
 \[-\alpha(1-\beta)-\beta(p-1)=-\beta-1,\quad\text{or equivalently}\quad \beta=\frac{1-\alpha}{p-2-\alpha}.\]
Hence, we expect the following asymptotic behavior
 \[\V(t)\sim t^{-\frac{1-\alpha}{p-2-\alpha}},\quad \D(t)\sim t^{1-\frac{1-\alpha}{p-2-\alpha}}.\]
Note that the rates above are subject to the assumption $\beta<1$, or equivalently $p>3$. 

For $\beta>1$, $\V(t)$ is integrable and therefore $\D(t)\leq\overline\D$ is bounded. Then $\phi(\D(t))$ has a positive lower bound $\phim=\phi(\Dbar)>0$. Theorem \ref{thm:alpha0} suggests that the asymptotic behavior would be
 \[\V(t)\sim t^{-\frac{1}{p-2}},\quad \D(t)\leq\Dbar.\]

The heuristic arguments agree with the asymptotic alignment and flocking behaviors with rates in Figure \ref{fig:main}. The rest of the section is devoted to a rigorous study of the arguments. 
We introduce a method based on constructing \emph{invariant regions} to obtain the desired bounds. Moreover, we will show unconditional alignment and flocking properties to the solutions.
 
\subsection{Scenario 1: Unconditional alignment and sub-linear growth} 
We first state our result on the asymptotic behaviors of $(\D,\V)$ when $p>3$.

\begin{theorem}\label{thm:case1}
Let $p>3$ and $\phi$ satisfies \eqref{eq:alpha} with $\alpha\in[0,1)$. 
Take any bounded $(\D_0,\V_0)$. Suppose $(\D,\V)$ satisfies \eqref{eq:ODI}. Then, we have
\begin{equation}\label{eq:case1bound}
 \D(t)\lesssim t^{1-\frac{1-\alpha}{p-2-\alpha}},\quad \V(t)\lesssim t^{-\frac{1-\alpha}{p-2-\alpha}}.
\end{equation}
\end{theorem}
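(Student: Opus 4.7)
My plan is to apply the method of invariant regions directly to the paired inequalities \eqref{eq:ODI}. Set $\beta := \frac{1-\alpha}{p-2-\alpha}$, which lies in $(0,1)$ precisely because $p>3$ and $\alpha\in[0,1)$. The heuristic exponent matching already presented suggests the ansatz $\D(t)\lesssim (1+t)^{1-\beta}$ and $\V(t)\lesssim (1+t)^{-\beta}$, so I would construct the time-dependent rectangle
\[\mathcal{R}(t):=\bigl\{(d,v):d\leq K_1(1+t)^{1-\beta},\ v\leq K_2(1+t)^{-\beta}\bigr\}\]
for suitably large constants $K_1,K_2>0$, and show that $(\D(t),\V(t))\in\mathcal{R}(t)$ for every $t\geq 0$; the bounds \eqref{eq:case1bound} are then immediate.

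\textbf{Reduction to a fixed rectangle.} As a bookkeeping device, I would rescale $\widetilde{\D}(t):=(1+t)^{\beta-1}\D(t)$ and $\widetilde{\V}(t):=(1+t)^{\beta}\V(t)$. Using $\phi(\D)\geq\lambda\D^{-\alpha}$ from \eqref{eq:alpha}, together with the algebraic identity $\beta(p-2-\alpha)+\alpha=1$ which is exactly the defining relation for $\beta$, a short calculation makes every time-dependent factor collapse to $(1+t)^{-1}$ and rewrites \eqref{eq:ODI} as
\begin{align*}
\widetilde{\D}'(t) &\leq (1+t)^{-1}\bigl(\widetilde{\V}(t)-(1-\beta)\widetilde{\D}(t)\bigr),\\
\widetilde{\V}'(t) &\leq (1+t)^{-1}\widetilde{\V}(t)\bigl(\beta-C\lambda\,\widetilde{\D}(t)^{-\alpha}\widetilde{\V}(t)^{p-2}\bigr).
\end{align*}
Invariance of $\mathcal{R}(t)$ is then equivalent to invariance of the fixed rectangle $\widetilde{\mathcal{R}}:=\{(d,v):d\leq K_1,\ v\leq K_2\}$ under this autonomous-looking system.

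\textbf{Choice of constants.} The vector field points inward on $\widetilde{\mathcal{R}}$ exactly when two scalar inequalities hold: on the top edge $\{\widetilde{\V}=K_2\}$ one needs $C\lambda K_1^{-\alpha}K_2^{p-2}\geq\beta$, and on the right edge $\{\widetilde{\D}=K_1\}$ one needs $K_2\leq(1-\beta)K_1$. Setting $K_1=K_2/(1-\beta)$ collapses both requirements, together with the initial inclusions $K_1\geq\D_0$ and $K_2\geq\V_0$, into a single scalar condition of the form $K_2^{p-2-\alpha}\gtrsim 1$, which is solvable because $p-2-\alpha>0$. A standard first-exit contradiction then yields $\widetilde{\D}(t)\leq K_1$ and $\widetilde{\V}(t)\leq K_2$ for all $t\geq 0$, which is precisely \eqref{eq:case1bound}.

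\textbf{Main obstacle.} Two technical points require care. First, the lower bound $\phi(r)\geq\lambda r^{-\alpha}$ in \eqref{eq:alpha} is only asserted for $r\geq R$, so when $\D(t)<R$ I would instead invoke the uniform bound $\phi(\D)\geq\phi(R)\geq \lambda R^{-\alpha}$; equivalently, one may replace $\lambda$ throughout by $\underline{\lambda}:=\min\bigl(\lambda,\phi(R)(1+R)^{\alpha}\bigr)$ and use the uniform bound $\phi(r)\geq\underline{\lambda}(1+r)^{-\alpha}$, which preserves the form of the rescaled inequalities up to a constant and does not affect the exponent balance. Second, and more conceptually, compatibility between the two boundary inequalities depends on both $1-\beta>0$ \emph{and} $p-2-\alpha>0$; each reduces to $p>3$, and as $p\downarrow 3$ one has $\beta\uparrow 1$, so the rectangle degenerates. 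This is the key structural place where the hypothesis $p>3$ enters, and it is exactly the threshold at which the logarithmic correction of the borderline Theorem \ref{thm:case3} appears.
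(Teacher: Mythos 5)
Your proposal is correct and follows essentially the same route as the paper: the identical rescaling $\widetilde{\D}=(1+t)^{\beta-1}\D$, $\widetilde{\V}=(1+t)^{\beta}\V$ with $\beta=\frac{1-\alpha}{p-2-\alpha}$, the same rectangular invariant region with aspect ratio $K_1=K_2/(1-\beta)$, the same two boundary inequalities, and the same first-exit contradiction (the paper merely adds the cosmetic time change $\tau=\log(1+t)$ to make the system autonomous). Your handling of the restriction $r\geq R$ in \eqref{eq:alpha} is in fact more careful than the paper's, which applies the lower bound $\phi(r)\geq\lambda r^{-\alpha}$ without comment.
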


To prove the theorem, we first scale $(\D,\V)$ according to the expected time scales. Define
\begin{equation}\label{eq:DVscale}
    \Dt(t)=(t+1)^{\beta^*-1} \D(t),\quad\Vt(t)=(t+1)^{\beta^*} \V(t),
\end{equation}
where for simplicity we denote
\begin{equation}\label{eq:betasup}
\beta^*=\frac{1-\alpha}{p-2-\alpha}.	
\end{equation}
Then, the bounds in \eqref{eq:case1bound} hold if $(\Dt,\Vt)$ are bounded.

To control $(\Dt,\Vt)$, we calculate their dynamics using \eqref{eq:ODI}. It yields
\[
\Dt'(t)=(\beta^*-1)(t+1)^{\beta^*-2}\D(t)+(t+1)^{\beta^*-1}\D'(t)
    \leq \frac{1}{t+1} \big((\beta^*-1) \Dt(t) +\Vt(t)\big)
\]
and
\begin{align}
 \Vt'(t)=&\,\beta^*(t+1)^{\beta^*-1}\V(t)+(t+1)^{\beta^*}\V'(t)\label{eq:Vtprime}\\
 \leq&\, \frac{\beta^*}{t+1} \Vt(t)-(t+1)^{\beta^*} C \phi\big((t+1)^{1-\beta^*}\Dt(t)\big)\cdot(t+1)^{-\beta^*(p-1)}\Vt(t)^{p-1}\nonumber\\
 \leq&\,\frac{1}{t+1} \big(\beta^* \Vt(t)-\lambda C\Dt(t)^{-\alpha}\Vt(t)^{p-1}\big).\nonumber
\end{align}
Here, we have used the definition of $\beta^*$ \eqref{eq:betasup} and the assumption on $\phi$ \eqref{eq:alpha} in the last inequality.

To obtain an autonomous system of inequalities, we shall introduce a new time variable
\[\tau=\log(t+1),\]
so that $\frac{d\tau}{dt}=\frac{1}{t+1}$. For simplicity, we still use $(\Dt,\Vt)$ to denote the corresponding functions of $\tau$. This yields the paired inequalities 
 \begin{equation}\label{eq:DtVt1}
   \begin{cases}
    \Dt'(\tau) \leq(\beta^*-1) \Dt(\tau) +\Vt(\tau),\\
    \Vt'(\tau) \leq \beta^* \Vt(\tau)-\lambda C\Dt(\tau)^{-\alpha}\Vt(\tau)^{p-1},
   \end{cases}
	\quad\text{with}\quad
  \begin{cases}
   \Dt(0)=\D_0,\\
   \Vt(0)=\V_0.
  \end{cases}
 \end{equation}
We are left to show that $(\Dt,\Vt)$ are bounded, using the inequalities in \eqref{eq:DtVt1}. Theorem \ref{thm:case1} is proved given the following proposition.

\begin{proposition}\label{prop:invreg}
 Let $(\D_0,\V_0)\in\R_+\times\R_+$. Suppose $(\Dt,\Vt)$ satisfies \eqref{eq:DtVt1}. Then $(\Dt,\Vt)$ are bounded in all time, namely there exists finite constants $\Dtbar$ and $\Vtbar$, depending on $\D_0, \V_0, p, \alpha$, such that
 \begin{equation}\label{eq:DtVtbound1}
  \Dt(\tau)\leq\Dtbar,\quad \Vt(\tau)\leq\Vtbar,\quad\forall~\tau\geq0. 
 \end{equation}
\end{proposition}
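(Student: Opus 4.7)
The plan is to construct a bounded rectangular invariant region $R = [0,\Dtbar]\times[0,\Vtbar]$ in the $(\Dt,\Vt)$-plane that contains the initial point $(\D_0,\V_0)$, and show that \eqref{eq:DtVt1} forces the trajectory to remain in $R$ for all $\tau\geq 0$. The geometry of $R$ is dictated by the two nullclines of the right-hand sides of \eqref{eq:DtVt1}. The first inequality gives $\Dt'(\tau)\leq 0$ whenever $\Vt(\tau)\leq (1-\beta^*)\Dt(\tau)$, and the second gives $\Vt'(\tau)\leq 0$ whenever $\Vt(\tau)^{p-2}\geq \frac{\beta^*}{\lambda C}\Dt(\tau)^{\alpha}$. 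The point to note is that the assumptions $p>3$ and $\alpha\in[0,1)$ yield $\beta^*\in(0,1)$ and $\alpha/(p-2)<1$, so the second null curve $\Vt=(\beta^*/\lambda C)^{1/(p-2)}\Dt^{\alpha/(p-2)}$ grows strictly sublinearly in $\Dt$, while the first nullcline $\Vt=(1-\beta^*)\Dt$ grows linearly. Consequently the two ``good'' half-regions have a common cone that opens out for large $\Dt$, and a sufficiently large corner $(\Dtbar,\Vtbar)$ with $\Vtbar=(1-\beta^*)\Dtbar$ lies in both of them.

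Concretely, I will take $\Vtbar=(1-\beta^*)\Dtbar$ and
\[
 \Dtbar \;=\; \max\!\left\{\D_0,\; \tfrac{\V_0}{1-\beta^*},\; \left(\tfrac{\beta^*}{\lambda C(1-\beta^*)^{p-2}}\right)^{\!1/(p-2-\alpha)}\right\}.
\]
The first two terms guarantee $(\D_0,\V_0)\in R$; the third is equivalent to $(1-\beta^*)^{p-2}\Dtbar^{p-2-\alpha}\geq \beta^*/(\lambda C)$, which places the corner $(\Dtbar,\Vtbar)$ inside the $\Vt$-nullcline region. The inward-pointing check on $\partial R$ is then algebraic: on the right edge $\Dt=\Dtbar$, $\Dt'(\tau)\leq (\beta^*-1)\Dtbar+\Vt(\tau)\leq (\beta^*-1)\Dtbar+\Vtbar=0$; on the top edge $\Vt=\Vtbar$, monotonicity of $\Dt\mapsto \Dt^{-\alpha}$ together with the defining inequality for $\Dtbar$ yields $\Vt'(\tau)\leq \beta^*\Vtbar-\lambda C\Dtbar^{-\alpha}\Vtbar^{p-1}\leq 0$; the bottom edge $\Vt=0$ and left edge $\Dt=0$ are not crossed since $\Dt,\Vt\geq 0$ by their definitions \eqref{eq:DVscale}.

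The main subtlety is converting ``vector field points inward on $\partial R$'' into a genuine trajectory bound, because \eqref{eq:DtVt1} is a system of inequalities rather than equalities, so there is no uniqueness or standard comparison to invoke. The standard device is to enlarge the rectangle by an infinitesimal $\epsilon>0$ (replacing $\Dtbar,\Vtbar$ by $\Dtbar+\epsilon,\Vtbar+\epsilon$) so that the inward conditions become strict, argue by contradiction at the first putative exit time $\tau_*$ that the inequality at $\tau_*$ forces the corresponding component of $(\Dt,\Vt)$ to be non-increasing there, and then let $\epsilon\to 0^+$. I expect this continuity-based exit-time argument, together with the need to keep $\Dt(\tau)$ strictly positive so that $\Dt^{-\alpha}$ remains finite (which follows from $\D(t)\geq 0$ in the original variables combined with the scaling \eqref{eq:DVscale} and $\D_0>0$), to be the most delicate part of the write-up; the rest reduces to the algebraic bookkeeping sketched above.
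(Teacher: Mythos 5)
Your proposal is correct and takes essentially the same route as the paper: the same rectangular invariant region (your corner $\bigl(\Dtbar,(1-\beta^*)\Dtbar\bigr)$ coincides with the paper's $\bigl(M/(1-\beta^*),M\bigr)$ upon setting $M=\Vtbar$, since $(1-\beta^*)\left(\tfrac{\beta^*}{\lambda C(1-\beta^*)^{p-2}}\right)^{1/(p-2-\alpha)}=\left(\tfrac{\beta^*}{\lambda C(1-\beta^*)^{\alpha}}\right)^{1/(p-2-\alpha)}$), the same nullcline/corner analysis, and the same first-exit-time contradiction. Your $\eps$-enlargement and the positivity remark are harmless refinements (the $-\lambda C\Dt^{-\alpha}\Vt^{p-1}$ term only becomes more negative as $\Dt\to0$, so it never threatens the upper bound), and the paper simply runs the exit-time argument directly.
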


We shall remark that Proposition \ref{prop:invreg} works for any initial data $(\D_0,\V_0)$. Therefore, the resulting alignment behavior is \emph{unconditional}.

\begin{proof}[Proof of Proposition \ref{prop:invreg}]
We make use of the \emph{method of invariant region}. The plan is to construct a bounded region in $\R_+\times\R_+$ that contains $(\D_0,\V_0)$, and show that the trajectory of $(\Dt(\tau),\Vt(\tau))$ never exits the region. 

Define
\begin{equation}\label{eq:M}
 M=\max\left\{\V_0,(1-\beta^*)\D_0,\left(\frac{\beta^*}{\lambda C(1-\beta^*)^\alpha}\right)^{\frac{1}{p-2-\alpha}}\right\},
\end{equation}
and consider the region
\begin{equation}\label{eq:invreg}
 A=\left[0,\frac{M}{1-\beta^*}\right]\times\left[0,M\right].
\end{equation}
Figure \ref{fig:DTVT} illustrates the the invariant region.
From the definition, it is easy to see that $(\Dt(0),\Vt(0))\in A$. 

\begin{figure}[ht]
\begin{tikzpicture}[domain=0:5]
 \draw[<->] (0,2.5) node[above]{$\Vt$} -- (0,0) node[below left]{$0$} -- (6,0) node[right]{$\Dt$};
 \draw (0,2) node[left]{$\Vtbar=M$} -- (4,2) -- (4,0) node[below,xshift=6mm]{$\Dtbar=\frac{M}{1-\beta^*}$};
 \draw[blue,->] (1,2) -- (1,1.7);
 \draw[blue,->] (2,2) -- (2,1.7);
 \draw[blue,->] (3,2) -- (3,1.7);
 \draw[red,->] (4,1) -- (3.7,1);
 \draw[red,->] (4,.5) -- (3.7,.5);
 \draw[red,->] (4,1.5) -- (3.7,1.5);
 \draw[red!70,dashed] plot(\x,\x/2) node[right,yshift=2mm] {{\footnotesize$(\beta^*-1) \Dt+\Vt=0$}};
 \draw[blue!70,dashed] plot(\x,{(\x)^(1/2)}) node[right,yshift=-2mm] {{\footnotesize$\beta^* \Vt-\lambda C\Dt^{-\alpha}\Vt^{p-1}=0$}};
 \draw node at (2,1) {$A$};
\end{tikzpicture}
\caption{An illustration of the invariant region $A$ defined in \eqref{eq:invreg}. To the right of the red line, $\Dt'\leq0$. Above the blue curve, $\Vt'\leq0$. Hence, trajectories can not exit $A$.} \label{fig:DTVT}
\end{figure}

We now show that $(\Dt(\tau),\Vt(\tau))\in A$ for all $\tau\geq0$.
Let us argue by contradiction. 
Suppose there exists a finite time $\tau$ such that $(\Dt(\tau),\Vt(\tau))\not\in A$. Then by continuity, there must exists a time $\tau_*$ such that $(\Dt,\Vt)$ exits the region $A$ at $\tau=\tau_*$, namely
\[(\Dt(\tau_*),\Vt(\tau_*))\in \pa A\quad\text{and}\quad
\Dt(\tau_*+),\Vt(\tau_*+))\not\in A.\]
There are two cases.

\textit{Case 1:} $(\Dt,\Vt)$ exits to the right, namely $\Dt(\tau_*)=\frac{M}{1-\beta^*}$, $\Vt(\tau_*)\in[0,M]$, and $\Dt(\tau_*+)>\frac{M}{1-\beta^*}$. We apply \eqref{eq:DtVt1}$_1$ and get the following inequality
\[\Dt'(\tau_*)\leq -(1-\beta^*) \Dt(\tau_*) +\Vt(\tau_*)
\leq -(1-\beta^*)\cdot\frac{M}{1-\beta^*}+M=0.\]
Hence, $\Dt(\tau_*+)\leq\Dt(\tau_*)$. This leads to a contradiction.

\textit{Case 2:} $(\Dt,\Vt)$ exits to the top, namely $\Dt(\tau_*)\in[0,\frac{M}{1-\beta^*}]$, $\Vt(\tau_*)=M$, and $\Vt(\tau_*+)>M$. We apply \eqref{eq:DtVt1}$_2$ and obtain
 \begin{align*}
  \Vt'(\tau_*)\leq&\,\beta^*\Vt(\tau_*)-\lambda C\Dt(\tau_*)^{-\alpha}\Vt(\tau_*)^{p-1}\leq\beta^*M-\lambda C\cdot\frac{(1-\beta^*)^\alpha}{M^\alpha}\cdot M^{p-1}\\
  =&\, \beta^*M\left(1-\frac{\lambda C(1-\beta^*)^\alpha}{\beta^*}M^{p-2-\alpha}\right)\leq 0,
 \end{align*}
where the definition of $M$ in \eqref{eq:M} ensures the last inequality. Hence, $\Vt(\tau_*+)\leq\Vt(\tau_*)$. This leads to a contradiction.

We have shown that the dynamics of $(\Dt,\Vt)$ is flowing inward at the boundary (see illustration in Figure \ref{fig:DTVT}). Therefore $(\Dt,\Vt)$ can not exit $A$ from either side of the boundary. Therefore, $(\Dt,\Vt)$ has to stay inside $A$ in all time. We conclude with \eqref{eq:DtVtbound1} with
\[\Dtbar=\frac{M}{1-\beta^*}\quad\text{and}\quad
\Vtbar=M,\]
with $\beta^*$ and $M$ defined in \eqref{eq:betasup} and \eqref{eq:M} respectively, depending on $\D_0,\V_0,p$ and $\alpha$.
\end{proof}

Theorem \ref{thm:case1} is a direct consequence of Proposition \ref{prop:invreg}. Indeed, we have
\[\D(t)\leq \Dtbar (t+1)^{1-\beta^*}\quad\text{and}\quad 
\V(t)\leq \Vtbar (t+1)^{-\beta^*},\]
which leads to \eqref{eq:case1bound}.
Since the result holds for any initial conditions $(\D_0,\V_0)$, the system has unconditional alignment. There is no guaranteed flocking in this scenario due to the nonlinearity. However, we obtain a bound on the growth of $\D(t)$ that is sub-linear in time.

\subsection{Scenario 2: Unconditional flocking and alignment} When $2<p<3$, the heuristic argument suggests the asymptotic flocking and alignment phenomena \eqref{eq:case1bound}. We will show these behaviors are \emph{unconditional}. 

\begin{theorem}\label{thm:case2}
Let $p\in(2,3)$ and $\phi$ satisfies \eqref{eq:alpha} with $\alpha\in[0,1)$. 
Take any bounded $(\D_0,\V_0)$. Suppose $(\D,\V)$ satisfies \eqref{eq:ODI}. Then, we have
\begin{equation}\label{eq:case2bound}
 \D(t)\leq \Dbar,\quad \V(t)\lesssim t^{-\frac{1}{p-2}}.
\end{equation}
\end{theorem}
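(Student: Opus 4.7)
The plan is to chain two results already established in the preliminaries, in contrast to the scaling-plus-invariant-region construction used for Theorem~\ref{thm:case1}. Since $2<p<3$ and $\alpha\in[0,1)$ fit the fat-tail hypothesis of Theorem~\ref{thm:Lyap}, the first step is to invoke that theorem to obtain the unconditional flocking bound $\D(t)\leq\Dbar$ with
\[
 \Dbar=\left(\D_0^{1-\alpha}+\frac{1-\alpha}{(3-p)\lambda C}\V_0^{3-p}\right)^{\frac{1}{1-\alpha}}<\infty.
\]
This step is essentially bookkeeping: the condition $\alpha<1$ guarantees the primitive $\psi(r)=\int_{\D_0}^r\phi$ grows unboundedly at infinity, so $\psi^{-1}$ is well-defined on $[0,\infty)$ and the Lyapunov computation $\mathcal{E}'(t)\leq 0$ closes for \emph{arbitrary} $\V_0$.

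With flocking secured, the second step is to decouple the two inequalities in~\eqref{eq:ODI}. Since $\phi$ is radially decreasing, $\phi(\D(t))\geq\phi(\Dbar)=:\phim>0$ uniformly in $t$, and substituting into~\eqref{eq:ODI}$_2$ produces the scalar ODI $\V'(t)\leq -C\phim\,\V(t)^{p-1}$. This is precisely the hypothesis of Theorem~\ref{thm:alpha0} with the effective global lower bound $\phim$; in the regime $p>2$ that theorem yields the algebraic decay $\V(t)\lesssim t^{-1/(p-2)}$, matching the rate in~\eqref{eq:case2bound}.

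There is no substantial obstacle; the result is essentially a corollary of Theorems~\ref{thm:Lyap} and~\ref{thm:alpha0}. The reason no scaling-plus-invariant-region construction is required here is that the expected rate $1/(p-2)>1$ for $p<3$ makes $\V$ integrable in time, so the two inequalities in~\eqref{eq:ODI} can be handled sequentially---bound $\D$ first by the Lyapunov argument, then control $\V$ by the decoupled ODI. This is exactly why, as already noted in the introduction, the Lyapunov approach suffices in Scenarios~2 and~3 but must be replaced by the invariant-region argument once $p\geq 3$, where $\V$ ceases to be integrable and the coupling in~\eqref{eq:ODI} becomes genuinely two-way.
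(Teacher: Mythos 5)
Your argument is correct, and the paper itself flags this route as available: immediately after Theorem~\ref{thm:Lyap} the authors remark that ``once the flocking property is shown, one can apply Theorem~\ref{thm:alpha0} with $\phim=\phi(\Dbar)$ and obtain alignment with polynomial decay rate $\frac{1}{p-2}$.'' So chaining the fat-tail case of Theorem~\ref{thm:Lyap} with Theorem~\ref{thm:alpha0} does prove Theorem~\ref{thm:case2}. However, it is \emph{not} the proof the paper gives: the paper's proof of Theorem~\ref{thm:case2} deliberately avoids the Lyapunov functional and instead runs the invariant-region construction with a sub-optimal scaling exponent $\beta\in\bigl(1,\min\{\beta_*,\tfrac1\alpha\}\bigr)$, uses the resulting invariant box to conclude flocking, and only then falls back on Theorem~\ref{thm:alpha0} for the optimal rate. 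The trade-off is this: your route is shorter and rests on the classical functional $\mathcal{E}=\V^{3-p}+(3-p)C\psi(\D)$, but that functional is tied to $p<3$ (for $p\geq3$ the exponent $3-p$ is nonpositive and the argument collapses), whereas the sub-optimal-scaling device is exactly what the authors reuse verbatim in the thin-tail Scenario~3 (Theorem~\ref{thm:case4}), where it yields the larger, semi-unconditional subcritical region that the Lyapunov condition \eqref{eq:Lyapsub} cannot reach. Two small points to tighten: (i) Theorem~\ref{thm:alpha0} is stated under the global hypothesis \eqref{eq:alpha0}, which $\phi$ here does not satisfy; you should say, as the paper does, that one \emph{repeats the proof} of Theorem~\ref{thm:alpha0} using only $\phi(\D(t))\geq\phi(\Dbar)>0$ along the trajectory (positivity of $\phi(\Dbar)$ follows from \eqref{eq:alpha} and monotonicity). (ii) Your closing claim that the Lyapunov approach ``suffices'' in Scenario~3 overstates it --- there it gives only the $\V_0$-dependent condition \eqref{eq:Lyapsub}, not the semi-unconditional result.
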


Similarly to Scenario 1, we start with an appropriate time scaling on $(\D,\V)$. We shall only rescale $\V$ and define
\[\Vt(t)=(t+1)^{\beta_*} \V(t),\]
where we denote
\begin{equation}\label{eq:betasub}
 \beta_*=\frac{1}{p-2}.	
\end{equation}

Our goal is to bound $(\D,\Vt)$. We shall construct an invariant region
\[A = [0,\Dbar]\times[0,\Vtbar],\]
such that $(\D(t),\Vt(t))$ can not exit.

Unlike Scenario 1, since we do not scale $\D$, we can not find $\Dbar$ such that the dynamics if flowing inward at the boundary. Instead, the following bound holds as long as $(\D,\Vt)$ stays inside $A$
\[\D(t)\leq \D_0+\int_0^t\V(s)\,ds=\D_0+\int_0^t(s+1)^{-\beta_*}\Vt(s)\,ds\leq\D_0+\frac{\Vtbar}{\beta_*-1}.\]
Hence, $(\D,\Vt)$ can not exit to the right if we have
\begin{equation}\label{eq:Dbound}
 \D_0+\frac{\Vtbar}{\beta_*-1}\leq\Dbar.
\end{equation}

To argue that $(\D,\Vt)$ can not exit to the top, we compute the dynamics of $\Vt$ as in \eqref{eq:Vtprime} and get
\[\Vt'(t) \leq \frac{1}{t+1}\big(\beta_* \Vt(t)-\lambda C\D(t)^{-\alpha}\Vt(t)^{p-1}\big)\leq \frac{1}{t+1}\beta_* \Vt(t)\left(1-\frac{\lambda C}{\beta_*\Dbar^\alpha}\Vt(t)^{p-2}\right).\]
Therefore, the same argument in Case 2 of Proposition \ref{prop:invreg} implies that $(\D,\Vt)$ can not exit to the top of $A$ if we pick
\begin{equation}\label{eq:Vbound}
 \Vtbar\geq\max\left\{\V_0,\left(\frac{\beta_*\Dbar^\alpha}{\lambda C}\right)^{\beta_*}\right\}.
\end{equation}
If we can find $(\Dbar,\Vtbar)$ such that \eqref{eq:Dbound} and \eqref{eq:Vbound} hold, then $A$ is an invariant region.

Observe that the two conditions \eqref{eq:Dbound} and \eqref{eq:Vbound} imply
\begin{equation}\label{eq:DVbound}
\Vtbar\lesssim\Dbar\lesssim\Vtbar^{\frac{p-2}{\alpha}}=\Vtbar^{\frac{1}{\alpha\beta_*}}.	
\end{equation}
When $\frac{p-2}{\alpha}>1$, we can pick a large enough $\Vtbar$ such that both inequalities hold. Let us state the following proposition.

\begin{proposition}\label{prop:DVt}
	Let $2<p<3$ and $0<\alpha<p-2$. Then there exist $(\Dbar,\Vtbar)$ such that \eqref{eq:Dbound} and \eqref{eq:Vbound} hold.
\end{proposition}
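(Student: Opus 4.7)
The plan is to treat \eqref{eq:Dbound} and \eqref{eq:Vbound} as a coupled pair of constraints on the two unknowns $(\Dbar,\Vtbar)$ and reduce them to a single scalar inequality in $\Vtbar$. Since $p\in(2,3)$, we have $\beta_*=\frac{1}{p-2}>1$, so $\beta_*-1>0$ and the natural first step is to saturate \eqref{eq:Dbound} by choosing
\[
  \Dbar \;=\; \D_0 + \frac{\Vtbar}{\beta_*-1}.
\]
This is the smallest $\Dbar$ compatible with \eqref{eq:Dbound}, and since the right-hand side of \eqref{eq:Vbound} is monotone increasing in $\Dbar$, it is also the most favorable choice for verifying \eqref{eq:Vbound}.

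Substituting this expression into \eqref{eq:Vbound}, the problem reduces to exhibiting some $\Vtbar>0$ satisfying
\[
  \Vtbar \;\geq\; \V_0
  \qquad\text{and}\qquad
  \Vtbar \;\geq\; \left(\frac{\beta_*}{\lambda C}\right)^{\beta_*}\!\left(\D_0+\frac{\Vtbar}{\beta_*-1}\right)^{\alpha\beta_*}.
\]
The first inequality is handled trivially by taking $\Vtbar$ large. The crux is the second inequality, and here the hypothesis $\alpha<p-2$ enters decisively: it is equivalent to the statement
\[
  \alpha\beta_* \;=\; \frac{\alpha}{p-2} \;<\; 1.
\]
Thus the right-hand side is a strictly sub-linear function of $\Vtbar$, whereas the left-hand side is linear.

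Consequently, the function
\[
  f(\Vtbar) \;=\; \Vtbar - \left(\frac{\beta_*}{\lambda C}\right)^{\beta_*}\!\left(\D_0+\frac{\Vtbar}{\beta_*-1}\right)^{\alpha\beta_*}
\]
satisfies $f(\Vtbar)\to+\infty$ as $\Vtbar\to\infty$, so there exists a threshold $\Vtbar_0$ above which $f\geq 0$. Choosing any $\Vtbar\geq\max\{\V_0,\Vtbar_0\}$ and the corresponding $\Dbar$ via the saturation formula yields the desired pair. The argument is elementary and there is no serious obstacle; the only point that merits attention is the identification of $\alpha<p-2$ as the \emph{sharp} threshold for this monotonicity argument to close, which explains why the proposition is stated precisely in this range and why a different mechanism is required in the borderline and supercritical regimes treated later in the paper.
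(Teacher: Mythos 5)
Your proof is correct and follows essentially the same route as the paper's: both reduce the two constraints to a single scalar inequality in $\Vtbar$ and close it using the key fact that $\alpha<p-2$ is equivalent to $\alpha\beta_*<1$, so the $\Dbar^{\alpha\beta_*}$ term grows sub-linearly in $\Vtbar$. The only (cosmetic) difference is that the paper takes the slightly larger choice $\Dbar=\frac{2\Vtbar}{\beta_*-1}$ to extract an explicit closed-form threshold for $\Vtbar$, whereas you saturate \eqref{eq:Dbound} with equality and argue by asymptotic comparison.
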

\begin{proof}
 Let $\Dbar=\frac{2\Vtbar}{\beta_*-1}$. We will pick $\Vtbar$ such that \eqref{eq:Dbound} and \eqref{eq:Vbound} hold.
 
 First, if $\Vtbar\geq(\beta_*-1)\D_0$ we have
 \[\D_0+\frac{\Vtbar}{\beta_*-1}\leq \frac{2\Vtbar}{\beta_*-1}=\Dbar.\]
Next, since $p-2-\alpha>0$, we have
\begin{equation}\label{eq:Vtboundgtr}
\Vtbar\geq\left(\frac{\beta_*\Dbar^\alpha}{\lambda C}\right)^{\frac{1}{p-2}}=\left(\frac{\beta_*2^\alpha}{\lambda C(\beta_*-1)^\alpha}\right)^{\frac{1}{p-2}}\Vtbar^{\frac{\alpha}{p-2}}\quad\Leftrightarrow\quad
\Vtbar\geq \left(\frac{\beta_*2^\alpha}{\lambda C(\beta_*-1)^\alpha}\right)^{\frac{1}{p-2-\alpha}}.
\end{equation}
Hence, \eqref{eq:Dbound} and \eqref{eq:Vbound} hold if we pick
\[
\Vtbar=\max\left\{\V_0,(\beta_*-1)\D_0,\left(\frac{\beta_*2^\alpha}{\lambda C(\beta_*-1)^\alpha}\right)^{\frac{1}{p-2-\alpha}}\right\}\quad\text{and}\quad
\Dbar=\frac{2\Vtbar}{\beta_*-1}.
\]
\end{proof}

Note that Proposition \ref{prop:DVt} fails for $\alpha\in(p-2,1)$. Indeed, when $\frac{p-2}{\alpha}<1$, \eqref{eq:Vtboundgtr} becomes 
\[\Vtbar\leq \left(\frac{\beta_*2^\alpha}{\lambda C(\beta_*-1)^\alpha}\right)^{\frac{1}{p-2-\alpha}}.\]
Hence, we are not able to find $\Vtbar$ if $\V_0>\left(\frac{\beta_*2^\alpha}{\lambda C(\beta_*-1)^\alpha}\right)^{\frac{1}{p-2-\alpha}}$. 

To obtain unconditional flocking and alignment (namely show Theorem \ref{thm:case2} for any initial data), we need to upgrade our method. The idea is the following. We start with a sub-optimal scaling on $\V$ and show that $\V(t)\lesssim (t+1)^{-\beta}$ for some $\beta\in(1,\beta_*)$. This will lead to flocking: $\D(t)\leq\Dbar$. Then we can obtain the optimal decay rate $\beta_*$ applying Theorem \ref{thm:alpha0}.

\begin{proof}[Proof of Theorem \ref{thm:case2}]
Given any $\beta\in(1,\beta_*)$, we rescale $\V$ and define
\[\Vt(t)=(t+1)^\beta \V(t).\]
We will construct an invariant region
\[A = [0,\Dbar]\times[0,\Vtbar],\]
and show $(\D,\Vt)$ stays in $A$ in all time.

First, a similar argument as \eqref{eq:Dbound} implies that $(\D,\Vt)$ can not exit to the right if 
\begin{equation}\label{eq:Dbound2}
 \D_0+\frac{\Vtbar}{\beta-1}\leq\Dbar.
\end{equation}

Next, we focus on the condition that ensures that $(\D,\Vt)$ can not exit to the top of the invariant region $A$.
Compute
\begin{align*}
\Vt'(t)\leq&\, \frac{\beta}{t+1}\Vt(t)-\frac{\lambda C}{(t+1)^{\beta(p-2)}}\D(t)^{-\alpha}\Vt(t)^{p-1}\\
\leq&\,	\frac{\beta}{t+1}\Vt(t)\left(1-\frac{\lambda C}{\Dbar^\alpha}(t+1)^{(\beta_*-\beta)(p-2)}\Vt(t)^{p-2}\right).
\end{align*}

Fix any time $t_c$. Define 
\[\Vtbar_{t_c}=\left(\frac{\Dbar^\alpha}{\lambda C}\right)^{\beta_*}(t_c+1)^{-(\beta_*-\beta)}.\]
Then for any $t\geq t_c$, we have
\[1-\frac{\lambda C}{\Dbar^\alpha}(t+1)^{(\beta_*-\beta)(p-2)}\Vtbar_{t_c}^{p-2}\leq0.\]
Therefore, $\Vt(t)$ can not exit $[0,\Vtbar_{t_c}]$ after time $t_c$. 

To control $\Vt(t)$ before time $t_c$, we apply a rough bound $\V(t)\leq\V_0$, or equivalently
\[\Vt(t)\leq (t+1)^\beta\V_0\leq (t_c+1)^\beta\V_0,\quad\forall~t\in[0,t_c].\]

We pick the optimal $t_c=t_c^*$ where
\[t_c^*+1=\frac{\Dbar^{\alpha}}{\lambda C\V_0^{p-2}}\]
such that $(t_c^*+1)^\beta\V_0=\Vtbar_{t_c^*}$.
Then the argument above implies that $(\D,\Vt)$ can not exit to the top of the invariant region $A$ if we pick
\begin{equation}\label{eq:Vbound2}
 \Vtbar = \Vtbar_{t_c^*}=\frac{\V_0^{(\beta_*-\beta)(p-2)}}{(\lambda C)^\beta}\Dbar^{\alpha\beta}.
\end{equation}

\begin{remark}
Conditions \eqref{eq:Dbound2} and \eqref{eq:Vbound2} imply 
\begin{equation}\label{eq:DVbound2}
\Vtbar\lesssim\Dbar\lesssim\Vtbar^{\frac{1}{\alpha\beta}}.	
\end{equation}
This improves the bounds in \eqref{eq:DVbound}. In particular, we can choose $\beta$ such that $\frac{1}{\alpha\beta}>1$ such that a large enough $\Vtbar$ can ensure both inequalities hold.
\end{remark}

Now we find $(\Dbar,\Vtbar)$ such that \eqref{eq:Dbound2} and \eqref{eq:Vbound2} hold. Plug in \eqref{eq:Vbound2} to 
\eqref{eq:Dbound2}, we get the condition
\begin{equation}\label{eq:Dbound3}
\D_0+\frac{\V_0^{(\beta_*-\beta)(p-2)}}{(\beta-1)(\lambda C)^\beta}\Dbar^{\alpha\beta}\leq\Dbar.	
\end{equation}
Pick $\beta$ such that
\[\beta\in\left(1,\min\{\beta_*, \tfrac{1}{\alpha}\}\right).\]
Since $\alpha\beta<1$, a large enough $\Dbar$ will satisfy \eqref{eq:Dbound3}, for any given $(\D_0,\V_0)$. Indeed, we may pick 
\[\Dbar=\max\left\{2\D_0, \left(\frac{2\V_0^{1-\beta(p-2)}}{(\beta-1)(\lambda C)^\beta}\right)^{\frac{1}{1-\alpha\beta}}\right\},\]
and $\Vtbar$ from \eqref{eq:Vbound2}.

We have shown that with our choice of $(\Dbar,\Vtbar)$, the dynamics $(\D,\Vt)$ stays in the invariant region $A$ in all time. This implies the flocking phenomenon
\[\D(t)\leq\Dbar,\quad \forall~t\geq 0.\]

Finally, we can repeat the proof of Theorem \ref{thm:alpha0} with $\phim=\phi(\Dbar)$ and conclude that
$\V(t)\lesssim t^{-\beta_*}$.
\end{proof}

\subsection{The borderline scenario: Logarithmic growth}
From the heuristics, we expect $\V(t)\sim t^{-1}$ when $p=3$. This implies a possible logarithmic growth on $\D(t)$, that may then further affect the decay rate of $\V$. 

\begin{theorem}\label{thm:case3}
Let $p=3$ and $\phi$ satisfies \eqref{eq:alpha} with $\alpha\in[0,1)$. 
Take any bounded $(\D_0,\V_0)$. Suppose $(\D,\V)$ satisfies \eqref{eq:ODI}. Then, we have
\begin{equation}\label{eq:case3bound}
 \D(t)\lesssim (\log t)^{\frac{1}{1-\alpha}},\quad \V(t)\lesssim t^{-1}(\log t)^{\frac{\alpha}{1-\alpha}}.
\end{equation}
\end{theorem}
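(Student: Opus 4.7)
The strategy mirrors the rescaling-and-invariant-region approach of Proposition \ref{prop:invreg}, now calibrated to the logarithmic corrections predicted in \eqref{eq:case3bound}. Change the time variable to $\tau=\log(t+e)$ (so $\tau(0)=1$) and introduce
\begin{equation*}
 \Dt(\tau)=\tau^{-\frac{1}{1-\alpha}}\,\D(t),\qquad \Vt(\tau)=(t+e)\,\tau^{-\frac{\alpha}{1-\alpha}}\,\V(t).
\end{equation*}
With these choices, \eqref{eq:case3bound} is equivalent to $\Dt$ and $\Vt$ being uniformly bounded on $\tau\geq 1$.

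Using \eqref{eq:ODI} with $p=3$, the lower bound $\phi(r)\geq \lambda r^{-\alpha}$ from \eqref{eq:alpha}, and the chain rule $(t+e)\,\partial_t=\partial_\tau$, a direct computation yields the paired inequalities
\begin{equation*}
 \Dt'(\tau)\leq \frac{1}{\tau}\Big(\Vt(\tau)-\frac{\Dt(\tau)}{1-\alpha}\Big),\qquad \Vt'(\tau)\leq \Vt(\tau)\Big(1-\frac{\alpha}{(1-\alpha)\tau}-\lambda C\,\Dt(\tau)^{-\alpha}\Vt(\tau)\Big).
\end{equation*}
The exponents $\tfrac{1}{1-\alpha}$ and $\tfrac{\alpha}{1-\alpha}$ are chosen precisely so that a $-\Dt/((1-\alpha)\tau)$ damping appears in the first inequality and a $\tau$-independent leading constant appears in the bracket of the second; any other exponents would leave an unbalanced $\tau$-dependent coefficient incompatible with a fixed rectangular invariant region.

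Now construct $A=[0,\Dtbar]\times[0,\Vtbar]$ with
\begin{equation*}
 \Vtbar=\max\Big\{e\V_0,\ \tfrac{\D_0}{1-\alpha},\ \Big(\tfrac{(1-\alpha)^\alpha}{\lambda C}\Big)^{\frac{1}{1-\alpha}}\Big\},\qquad \Dtbar=(1-\alpha)\Vtbar.
\end{equation*}
On the right face $\Dt=\Dtbar$ the first inequality gives $\Dt'\leq \tfrac{1}{\tau}(\Vt-\Vtbar)\leq 0$; on the top face $\Vt=\Vtbar$, the third entry in the definition of $\Vtbar$ forces $\lambda C\,\Dtbar^{-\alpha}\Vtbar\geq 1$ and, together with the non-positive contribution $-\tfrac{\alpha}{(1-\alpha)\tau}\Vtbar$, yields $\Vt'\leq 0$. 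The exit-time/contradiction argument of Proposition \ref{prop:invreg} then prevents trajectories from leaving $A$, and unscaling delivers \eqref{eq:case3bound}. The only remaining nuisance is that $\phi(r)\geq \lambda r^{-\alpha}$ is only assumed for $r\geq R$; the complementary case $\D(t)\leq R$ for all $t$ falls directly under Theorem \ref{thm:alpha0} with $\phim=\phi(R)$ and $p=3$, which already gives $\V(t)\lesssim t^{-1}$ and $\D(t)\lesssim \log t$, so no generality is lost by invoking the power-law lower bound in the rescaled computation.
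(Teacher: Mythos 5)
Your proposal is correct and follows essentially the same route as the paper: the paper first rescales $\V$ by $(t+1)$, passes to $\tau=\log(t+1)$, and then applies a second rescaling by powers of $(\tau+1)$ with exponents $\gamma+1=\frac{1}{1-\alpha}$ and $\gamma=\frac{\alpha}{1-\alpha}$ before invoking the invariant-region argument of Proposition \ref{prop:invreg}; you simply fuse these two rescalings into one, arriving at the same autonomous-type paired inequalities and the same rectangle $[0,(1-\alpha)\Vtbar]\times[0,\Vtbar]$ with the same three-term maximum for $\Vtbar$ (up to the harmless $e\V_0$ versus $\V_0$ from your shifted time origin). Your closing remark about the restriction $r\ge R$ in \eqref{eq:alpha} is a point the paper glosses over, so it does no harm here.
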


\begin{remark}
 We see from \eqref{eq:case1bound} that the power on the logarithmic correction depends on $\alpha$. In particular, when $\alpha=0$, we have $\D(t)\lesssim\log t$ and $\V(t)\lesssim t^{-1}$. This coincides with the result in Theorem \ref{thm:alpha0}. The power that we obtained is sharp.
\end{remark}

\begin{proof}
 We start with the scaling in \eqref{eq:DVscale} and follow the same procedure that leads to \eqref{eq:DtVt1}. Since $p=3$, we have $\beta^*=1$. So there is no scaling on $\D$ and
 \[\Vt(t)=(t+1)\V(t).\]
 Then for this special case, \eqref{eq:DtVt1} reads
 \begin{equation}\label{eq:DtVt3}
   \begin{cases}
    \D'(\tau) \leq \Vt(\tau),\\
    \Vt'(\tau) \leq \Vt(\tau)-\lambda C\D(\tau)^{-\alpha}\Vt(\tau)^2,
   \end{cases}
	\quad\text{with}\quad
  \begin{cases}
   \D(0)=\D_0,\\
   \Vt(0)=\V_0.
  \end{cases}
 \end{equation}
 
Now we perform another time scaling on $\tau$
\[\Dtt(\tau)=(\tau+1)^{-(\gamma+1)}\D(\tau),\quad
 \Vtt(\tau)=(\tau+1)^{-\gamma}\Vt(\tau).\]
Note that since $\tau=\log(t+1)$, the scaling above is logarithmic in $t$. The power $\gamma$ will be determined later in \eqref{eq:gamma}. Apply \eqref{eq:DtVt3} and compute
\begin{align*}
 \Dtt'(\tau)=&\,-(\gamma+1)(\tau+1)^{-(\gamma+2)}\D(\tau)+(\tau+1)^{-(\gamma+1)}\D'(\tau)\\
 \leq&\,\frac{1}{\tau+1}\big(-(\gamma+1)\Dtt(\tau)+\Vtt(\tau)\big),\\
 \Vtt'(\tau)=&\,-\gamma(\tau+1)^{-(\gamma+1)}\Vt(\tau)+(\tau+1)^{-\gamma}\Vt'(\tau)\\
 \leq&\,-\frac{\gamma}{\tau+1}\Vtt(\tau)+\Vtt(\tau)\big(1-(\tau+1)^{\gamma-(\gamma+1)\alpha}\cdot\lambda C\Dtt(\tau)^{-\alpha}\Vtt(\tau)\big).
\end{align*}
We observe that when $\gamma-(\gamma+1)\alpha<0$, the dominate contribution to the dynamics of $\Vtt$ in large time is $\Vtt'(\tau)\leq\Vtt(\tau)$, leading to an uncontrollable exponential growth. On the other hand, when $\gamma-(\gamma+1)\alpha>0$, we have $\Vtt'(\tau)\leq0$ if $\tau$ is large enough. Hence, the optimal rate $\gamma$ is such that $\gamma-(\gamma+1)\alpha=0$, namely
\begin{equation}\label{eq:gamma}
 \gamma=\frac{\alpha}{1-\alpha}.
\end{equation}
The dynamics of $(\Dtt,\Vtt)$ satisfies
 \begin{equation}\label{eq:DttVtt}
   \begin{cases}
    \Dtt'(\tau) \leq \frac{1}{\tau+1}\big(-\frac{1}{1-\alpha}\Dtt(\tau)+\Vtt(\tau)\big),\\
    \Vtt'(\tau) \leq \Vtt(\tau)\big(1-\lambda C\Dtt(\tau)^{-\alpha}\Vtt(\tau)\big),
   \end{cases}
	\quad\text{with}\quad
  \begin{cases}
   \Dtt(0)=\D_0,\\
   \Vtt(0)=\V_0.
  \end{cases}
 \end{equation}

Next, we construct an invariant region
\[A=[0,\Dtbar]\times[0,\Vtbar],\]
such that $(\Dtt,\Vtt)$ stays in $A$ in all time.
Note that \eqref{eq:DttVtt} has the same structure as \eqref{eq:DtVt1}. Therefore, we can directly apply Proposition \ref{prop:invreg} and find $(\Dtbar,\Vtbar)$. More precisely, 
\[\Vtbar=\max\left\{\V_0,\frac{1}{1-\alpha}\D_0,\left(\frac{(1-\alpha)^{\alpha}}{\lambda C}\right)^{\frac{1}{1-\alpha}}\right\}\quad\text{and}\quad
\Dtbar=(1-\alpha)\Vtbar.\]

Finally, we conclude that
\[\D(t)\leq\Dtbar(\log(t+1)+1)^{\frac{1}{1-\alpha}},\quad
\V(t)\leq\Vtbar(t+1)^{-1}(\log(t+1)+1)^{\frac{\alpha}{1-\alpha}}.\]
This finishes the proof of \eqref{eq:case3bound}.
\end{proof}

\section{Sharpness of the decay rates}\label{sec:sharp}
In this section, we show that the decay rates of the velocity alignment that we obtain are sharp. In particular, the sharp decay rates can be achieve under the setup when there are two groups that are moving away from each other. 
For simple illustration, we consider the following \emph{two-particle} initial configuration in one-dimension 
\begin{equation}\label{eq:2particle}
\rho_0=\delta_{x=\frac{x_0}{2}}+\delta_{x=-\frac{x_0}{2}},\quad
\rho_0u_0=\frac{v_0}{2}\delta_{x=\frac{x_0}{2}}-\frac{v_0}{2}\delta_{x=-\frac{x_0}{2}},	
\end{equation}
where $x_0>0$, $v_0>0$, and $\delta_{x=x_0}$ denotes the Dirac delta function at $x_0$. So we have two particles with initial distance $x_0$ and they move away from each other with relative velocity $v_0$. One can formally check that
\[\rho(t)=\delta_{x=\frac{x(t)}{2}}+\delta_{x=-\frac{x(t)}{2}},\quad
\rho u(t)=\frac{v(t)}{2}\delta_{x=\frac{x(t)}{2}}-\frac{v(t)}{2}\delta_{x=-\frac{x(t)}{2}},\]
is a weak solution of the system \eqref{eq:main} where $(x(t), v(t))$ satisfies
\[
 \begin{cases}
  x'=v,\\ v'= -\phi(x)\Phi(v)=-\phi(x)v^{p-1},
 \end{cases}
\quad\text{with}\quad
\begin{cases}
 x(0)=x_0,\\v(0)=v_0.
\end{cases}
\]
Observe that $\D(t)=x(t)$ and $\V(t)=v(t)$. Therefore, the dynamics of $(\D,\V)$ has the same structure as \eqref{eq:ODI}, with the inequalities replaced by equalities. 
\begin{equation}\label{eq:ODE}
 \begin{cases}
  \D'(t)=\V,\\ \V'(t)= -\phi(\D(t))\V(t)^{p-1},
 \end{cases}
\quad\text{with}\quad
\begin{cases}
 \D(0)=x_0,\\ \V(0)=v_0.
\end{cases}
\end{equation}
We will study \eqref{eq:ODE} and show that the optimal decay (and growth) rates are achieved under the current setup.

Our first result considers the Scenario 1: $p>3$.
\begin{theorem}\label{thm:case1o}
Let $p>3$ and $\phi$ satisfies \eqref{eq:alpha} with $\alpha\in[0,1)$. Suppose $(\D,\V)$ satisfies \eqref{eq:ODE}, with initial data $x_0>0$ and $v_0>0$. Then, we have
\begin{equation}\label{eq:case1obound}
 \D(t)\sim t^{1-\frac{1-\alpha}{p-2-\alpha}},\quad \V(t)\sim t^{-\frac{1-\alpha}{p-2-\alpha}}.
\end{equation}
\end{theorem}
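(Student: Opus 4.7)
The upper bounds in \eqref{eq:case1obound} are immediate from Theorem \ref{thm:case1}, since \eqref{eq:ODE} is a special case of the paired inequalities \eqref{eq:ODI} with inequalities replaced by equalities. The substance is the matching lower bounds, and my plan is to exploit the autonomous nature of \eqref{eq:ODE} by passing to the phase plane. First, $\V(t) > 0$ for all $t \geq 0$: separation of variables in $\V' = -\phi(\D)\V^{p-1}$ yields
\[
\frac{1}{(p-2)\V(t)^{p-2}} = \frac{1}{(p-2)v_0^{p-2}} + \int_0^t \phi(\D(s))\,ds,
\]
whose right-hand side is finite on every bounded interval (because $\D(s) \geq x_0 > 0$, so $\phi(\D(s)) \leq \phi(x_0) < \infty$), ruling out $\V$ reaching zero. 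Hence $\D' = \V > 0$, so $\D$ is strictly increasing and $\V$ can be viewed as a function of $\D$ via $\tfrac{d\V}{d\D} = -\phi(\D)\V^{p-2}$. Integrating (using $p > 3$, so the antiderivative is regular) yields the exact first integral
\begin{equation}\label{eq:firstint}
\frac{1}{(p-3)\V(t)^{p-3}} - \frac{1}{(p-3)v_0^{p-3}} = \int_{x_0}^{\D(t)}\phi(r)\,dr.
\end{equation}
A short contradiction argument shows $\D(t) \to \infty$: otherwise the right-hand side of \eqref{eq:firstint} stays bounded, forcing $\V(t) \to \V_\infty > 0$, which via $\D' = \V$ drives $\D(t) \to \infty$, a contradiction.

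Next, using \eqref{eq:alpha} and $\alpha \in [0,1)$, I split $\int_{x_0}^\D \phi = \int_{x_0}^R \phi + \int_R^\D \phi$ and bound the second piece two-sidedly, obtaining
\[
\frac{\lambda}{1-\alpha}\D^{1-\alpha} - K \leq \int_{x_0}^\D \phi(r)\,dr \leq \frac{\Lambda}{1-\alpha}\D^{1-\alpha} + K, \qquad \D \geq R,
\]
with $K$ depending only on $\phi,x_0,R$. Inserting into \eqref{eq:firstint}, the leading term $\D^{1-\alpha}$ dominates for large $\D$, yielding
\[
c_1\, \D(t)^{-(1-\alpha)/(p-3)} \;\leq\; \V(t) \;\leq\; c_2\, \D(t)^{-(1-\alpha)/(p-3)}.
\]

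Finally, feeding this back into $\D'(t) = \V(t)$ produces a closed two-sided ODE for $\D$ alone:
\[
c_1\, \D^{-(1-\alpha)/(p-3)} \;\leq\; \frac{d\D}{dt} \;\leq\; c_2\, \D^{-(1-\alpha)/(p-3)}.
\]
Separating variables and using the identity $1 + \tfrac{1-\alpha}{p-3} = \tfrac{p-2-\alpha}{p-3}$ gives $\D(t)^{(p-2-\alpha)/(p-3)} \sim t$, i.e., $\D(t) \sim t^{(p-3)/(p-2-\alpha)} = t^{1-\beta^*}$ with $\beta^* = \tfrac{1-\alpha}{p-2-\alpha}$. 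Substituting back into the $\V$--$\D$ relation yields $\V(t) \sim t^{-(1-\alpha)/(p-2-\alpha)} = t^{-\beta^*}$, completing \eqref{eq:case1obound}. The only mild technical obstacle is the initial time window during which $\D(t)$ may lie below $R$; this contributes only bounded terms to \eqref{eq:firstint} that are absorbed into $K$. The whole argument ultimately rests on the clean algebraic identity linking the two integration exponents, which is what makes the phase-plane reduction so effective here.
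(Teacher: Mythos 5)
Your proof is correct, but it takes a genuinely different route from the paper. The paper proves the lower bounds by the same invariant-region machinery used for the upper bounds: it applies the rescaling \eqref{eq:DVscale}, derives the reversed system \eqref{eq:DtVt1o} (with $\Lambda$ in place of $\lambda$), and constructs an unbounded invariant region $B=[\tfrac{m}{1-\beta^*},\infty)\times[m,\infty)$ from which the trajectory cannot escape, yielding $\D(t)\geq \underline{\Dt}\,(t+1)^{1-\beta^*}$ and $\V(t)\geq \underline{\Vt}\,(t+1)^{-\beta^*}$ for all $t\geq0$ with explicit constants. You instead exploit the fact that \eqref{eq:ODE} is an exact autonomous system: since $\V>0$ and hence $\D$ is strictly increasing, the phase-plane reduction $\tfrac{d\V}{d\D}=-\phi(\D)\V^{p-2}$ integrates (for $p>3$) to the first integral $\tfrac{1}{(p-3)\V^{p-3}}=\tfrac{1}{(p-3)v_0^{p-3}}+\int_{x_0}^{\D}\phi(r)\,dr$, which together with \eqref{eq:alpha} gives the two-sided relation $\V\sim\D^{-(1-\alpha)/(p-3)}$ and then, via $\D'=\V$ and the exponent identity $1+\tfrac{1-\alpha}{p-3}=\tfrac{p-2-\alpha}{p-3}$, the claimed rates. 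Your computations check out, including the handling of the initial window where $\D<R$. What each approach buys: yours is more elementary and delivers the asymptotic equivalence $\V\sim\D^{-(1-\alpha)/(p-3)}$ with transparent constants, but it leans on both equalities in \eqref{eq:ODE} (the exact $\V$-equation is integrated in both directions), so it would not survive a relaxation to one-sided differential inequalities; the paper's argument needs only $\V'\geq-\Lambda\D^{-\alpha}\V^{p-1}$, is symmetric with Proposition \ref{prop:invreg}, and transfers with minimal change to the borderline case $p=3$ (Theorem \ref{thm:case3o}), where your first integral degenerates to a logarithm and would require a separate computation.
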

\begin{proof}
 The upper bounds are already proved in Theorem \ref{thm:case1}. We focus on the lower bounds
 \begin{equation}\label{eq:case1olbound}
 \D(t)\gtrsim t^{1-\frac{1-\alpha}{p-2-\alpha}},\quad \V(t)\gtrsim t^{-\frac{1-\alpha}{p-2-\alpha}}.
\end{equation}
Apply the scaling \eqref{eq:DVscale} and derive the following dynamics similar as \eqref{eq:DtVt1}
\begin{equation}\label{eq:DtVt1o}
   \begin{cases}
    \Dt'(\tau) = (\beta^*-1) \Dt(\tau) +\Vt(\tau),\\
    \Vt'(\tau) \geq \beta^* \Vt(\tau)-\Lambda \Dt(\tau)^{-\alpha}\Vt(\tau)^{p-1},
   \end{cases}
	\quad\text{with}\quad
  \begin{cases}
   \Dt(0)=x_0,\\
   \Vt(0)=v_0.
  \end{cases}
 \end{equation}
To obtain lower bounds on $(\Dt,\Vt)$, consider the following invariant region
\[B = \left[\frac{m}{1-\beta^*},\infty\right)\times[m,\infty),\]
where
\begin{equation}\label{eq:m}
 m=\min\left\{v_0,(1-\beta^*)x_0,\left(\frac{\beta^*}{\Lambda (1-\beta^*)^\alpha}\right)^{\frac{1}{p-2-\alpha}}\right\}>0,
\end{equation}

By definition, $(x_0,v_0)\in B$. The same argument as Proposition \ref{prop:invreg} implies that the dynamics of $(\Dt,\Vt)$ can not exit $B$. See Figure \ref{fig:DTVTo} for a quick illustration.

\begin{figure}[ht]
\begin{tikzpicture}[domain=0:5]
 \draw[<->] (0,2.5) node[above]{$\Vt$} -- (0,0) node[below left]{$0$} -- (6,0) node[right]{$\Dt$};
 \draw[dashed] (0,1) node[left]{$\Vtb=m$} -- (2,1) -- (2,0) node[below,xshift=6mm]{$\Dtb=\frac{m}{1-\beta^*}$};
 \draw (2,3) -- (2,1) -- (6,1);
 \draw[blue,->] (3,1) -- (3,1.3);
 \draw[blue,->] (4,1) -- (4,1.3);
 \draw[blue,->] (5,1) -- (5,1.3);
 \draw[red,->] (2,1.5) -- (2.3,1.5);
 \draw[red,->] (2,2) -- (2.3,2);
 \draw[red,->] (2,2.5) -- (2.3,2.5);
 \draw[red!70,dashed] plot(\x,\x/2) node[right] {{\footnotesize$(\beta^*-1) \Dt+\Vt=0$}};
 \draw[blue!70,dashed] plot(\x,{(\x/2)^(1/2)}) node[right] {{\footnotesize$\beta^* \Vt-\Lambda\Dt^{-\alpha}\Vt^{p-1}=0$}};
 \draw node at (4,2) {$B$};
\end{tikzpicture}
\caption{An illustration of the invariant region $B$ defined in \eqref{eq:invreg}. To the left of the red line, $\Dt'\geq0$. Below the blue curve, $\Vt'\geq0$. Hence, trajectories can not exit $B$.} \label{fig:DTVTo}
\end{figure}

Therefore, we obtain
\[\D(t)\geq\Dtb\,(t+1)^{1-\beta^*},\quad\text{and}\quad
\V(t)\geq\Vtb\,(t+1)^{-\beta^*},\]
with
\[\Dtb=\frac{m}{1-\beta^*}>0,\quad\text{and}\quad  \Vtb=m>0.\]
This finishes the proof of \eqref{eq:case1olbound}.
\end{proof}

Next, we consider the Scenario 2: $2<p<3$.
\begin{theorem}\label{thm:case2o}
Let $p\in(2,3)$ and $\phi$ satisfies \eqref{eq:alpha} with $\alpha\in[0,1)$. Suppose $(\D,\V)$ satisfies \eqref{eq:ODE}, with initial data $x_0>0$ and $v_0>0$. Then, we have
\begin{equation}\label{eq:case2obound}
 \D(t)\sim 1,\quad \V(t)\sim t^{-\frac{1}{p-2}}.
\end{equation}
\end{theorem}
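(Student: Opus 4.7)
The upper bounds in \eqref{eq:case2obound} are already contained in Theorem \ref{thm:case2}, since the system \eqref{eq:ODE} is the equality version of the paired inequalities \eqref{eq:ODI} with $(\D_0,\V_0)=(x_0,v_0)$. The plan is therefore to establish matching lower bounds $\D(t)\gtrsim 1$ and $\V(t)\gtrsim t^{-1/(p-2)}$.

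For the lower bound on $\D$, the first step is to show that $\V(t)>0$ for all $t\geq 0$. Introducing $w(t)=\V(t)^{2-p}$ and using \eqref{eq:ODE}$_2$, one computes
\begin{equation*}
w'(t)=(2-p)\V(t)^{1-p}\V'(t)=(p-2)\phi(\D(t))>0.
\end{equation*}
Hence $w$ stays finite and positive for all $t\geq 0$, so $\V$ never reaches zero. Then \eqref{eq:ODE}$_1$ implies that $\D$ is strictly increasing, and in particular $\D(t)\geq x_0>0$, yielding the first lower bound.

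Combining the previous step with Theorem \ref{thm:case2} gives the two-sided sandwich $x_0\leq\D(t)\leq\Dbar$ for all $t\geq 0$. Since $\phi$ is radially decreasing, this in turn yields $\phi(\D(t))\leq\phi(x_0)$. Integrating the identity $w'(t)=(p-2)\phi(\D(t))$ and inserting this upper bound,
\begin{equation*}
\V(t)^{2-p}=v_0^{2-p}+(p-2)\int_0^t\phi(\D(s))\,ds\leq v_0^{2-p}+(p-2)\phi(x_0)\,t.
\end{equation*}
Because $2-p<0$, the map $x\mapsto x^{1/(2-p)}$ is decreasing on $\R_+$, so inverting the above inequality produces
\begin{equation*}
\V(t)\geq\bigl(v_0^{2-p}+(p-2)\phi(x_0)\,t\bigr)^{\frac{1}{2-p}}\sim t^{-\frac{1}{p-2}}
\end{equation*}
for large $t$, which is the desired lower bound.

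There is no genuine obstacle once Theorem \ref{thm:case2} is invoked: the lower bounds reduce to a clean separation-of-variables argument for the autonomous scalar ODE satisfied by $\V$, whose coefficient $\phi(\D(t))$ is pinned between the two positive constants $\phi(\Dbar)$ and $\phi(x_0)$. Unlike the $p>3$ case, no rescaling or invariant-region construction is needed on the lower side, because flocking pre-compactifies the orbit and reduces everything to a one-variable ODE analysis.
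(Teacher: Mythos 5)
Your proposal is correct and follows essentially the same route as the paper: the trivial bound $\D'(t)=\V(t)\geq 0$ gives $\D(t)\geq x_0$, hence $\phi(\D(t))\leq\phi(x_0)$, and separation of variables in \eqref{eq:ODE}$_2$ yields $\V(t)\geq\bigl(v_0^{2-p}+(p-2)\phi(x_0)\,t\bigr)^{1/(2-p)}$, which combined with Theorem \ref{thm:case2} gives \eqref{eq:case2obound}. The intermediate step establishing strict positivity of $\V$ via $w=\V^{2-p}$ is harmless but not needed, since $\V\geq 0$ already suffices for the lower bound on $\D$.
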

\begin{proof}
We start with a trivial bound $\D'(t)=\V(t)\geq0$. This leads to a lower bound $\D(t)\geq x_0>0$. Then from \eqref{eq:ODE}$_2$ we obtain
\[\V'(t)\geq -\phi(x_0)\V(t)^{p-1}\quad\Rightarrow\quad
\V(t)\geq \left((p-2)\phi(x_0)\,t+v_0^{-(p-2)}\right)^{-\frac{1}{p-2}}.\]
Together with Theorem \ref{thm:case2}, we conclude with \eqref{eq:case2obound}.
\end{proof}

Finally, we state the result on the borderline scenario $p=3$. The proof is similar to Theorem \ref{thm:case1o}. We omit the details.
\begin{theorem}\label{thm:case3o}
Let $p=3$ and $\phi$ satisfies \eqref{eq:alpha} with $\alpha\in[0,1)$. Suppose $(\D,\V)$ satisfies \eqref{eq:ODE}, with initial data $x_0>0$ and $v_0>0$. Then, we have
\begin{equation}\label{eq:case3obound}
  \D(t)\sim (\log t)^{\frac{1}{1-\alpha}},\quad \V(t)\sim t^{-1}(\log t)^{\frac{\alpha}{1-\alpha}}.
\end{equation}
\end{theorem}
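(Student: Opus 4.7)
The plan is to mirror the strategy of Theorem \ref{thm:case1o}, now applied to the borderline scaling used in Theorem \ref{thm:case3}. First, the upper bounds in \eqref{eq:case3obound} follow immediately from Theorem \ref{thm:case3}, since every solution of the ODE system \eqref{eq:ODE} trivially satisfies the paired inequalities \eqref{eq:ODI}. All of the work is therefore in producing the matching lower bounds
\[\D(t)\gtrsim(\log t)^{\frac{1}{1-\alpha}},\qquad \V(t)\gtrsim t^{-1}(\log t)^{\frac{\alpha}{1-\alpha}}.\]

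I would carry out exactly the two-step change of variables used in Theorem \ref{thm:case3}: set $\Vt(t)=(t+1)\V(t)$, pass to $\tau=\log(t+1)$, then define $\Dtt(\tau)=(\tau+1)^{-(\gamma+1)}\D(\tau)$ and $\Vtt(\tau)=(\tau+1)^{-\gamma}\Vt(\tau)$ with $\gamma=\alpha/(1-\alpha)$. Because \eqref{eq:ODE} holds with equality and we now invoke the \emph{upper} bound $\phi(r)\leq \Lambda r^{-\alpha}$ from \eqref{eq:alpha}, the same computation that produced \eqref{eq:DttVtt} yields the reverse-direction inequality system
\begin{equation*}
\begin{cases}
\Dtt'(\tau)=\tfrac{1}{\tau+1}\bigl(-\tfrac{1}{1-\alpha}\Dtt(\tau)+\Vtt(\tau)\bigr),\\
\Vtt'(\tau)\geq -\tfrac{\gamma}{\tau+1}\Vtt(\tau)+\Vtt(\tau)\bigl(1-\Lambda\,\Dtt(\tau)^{-\alpha}\Vtt(\tau)\bigr),
\end{cases}
\end{equation*}
starting from $(\Dtt(0),\Vtt(0))=(x_0,v_0)$.

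In analogy with the lower invariant region $B$ from the proof of Theorem \ref{thm:case1o}, I would then seek a trapping set
\[B=\Bigl[\tfrac{m}{1-\alpha},\infty\Bigr)\times[m,\infty)\]
with $m=m(x_0,v_0,\alpha,\Lambda)>0$ chosen so that $(x_0,v_0)\in B$ and the flow points into $B$ along $\pa B$. On the left face, the equality for $\Dtt'$ immediately forces $\Dtt'\geq 0$. On the bottom face $\{\Vtt=m\}$, the lower bound on $\Vtt'$ gives non-negativity provided $1-\Lambda m^{1-\alpha}(1-\alpha)^{\alpha}\geq \gamma/(\tau+1)$. The main obstacle is precisely this extra decaying drag term $-\gamma\Vtt/(\tau+1)$, which is absent in Theorem \ref{thm:case1o}: I would handle it by first fixing $m$ so small that the slack $c_0:=1-\Lambda m^{1-\alpha}(1-\alpha)^\alpha$ is a strictly positive constant, then starting the invariant region argument at a threshold time $\tau_0$ so large that $\gamma/(\tau_0+1)\leq c_0$, and finally bridging the compact interval $[0,\tau_0]$ by a standard continuity/Gronwall estimate which shows that $\Dtt$ and $\Vtt$ remain bounded below by strictly positive constants on $[0,\tau_0]$; shrinking $m$ if necessary ensures $(\Dtt(\tau_0),\Vtt(\tau_0))\in B$.

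Once invariance of $B$ on $[\tau_0,\infty)$ is established, unwinding the three rescalings yields
\[\D(t)\geq \Dtb\,(\log(t+1)+1)^{\frac{1}{1-\alpha}},\qquad \V(t)\geq \Vtb\,(t+1)^{-1}(\log(t+1)+1)^{\frac{\alpha}{1-\alpha}},\]
with positive constants $\Dtb=m/(1-\alpha)$ and $\Vtb=m$. Combined with the upper bounds from Theorem \ref{thm:case3}, this delivers \eqref{eq:case3obound}.
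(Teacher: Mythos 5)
Your strategy is exactly the one the paper intends (it omits the proof, saying only that it is similar to Theorem \ref{thm:case1o}): upper bounds from Theorem \ref{thm:case3}, reversed inequalities from the equality in \eqref{eq:ODE} together with $\phi(r)\leq\Lambda r^{-\alpha}$, and a lower trapping region in the doubly rescaled variables $(\Dtt,\Vtt)$. You also correctly isolate the one genuinely new feature relative to Theorem \ref{thm:case1o} --- the decaying drag term $-\gamma\Vtt/(\tau+1)$ --- and your fix (start the invariance argument at a large time $\tau_0$ with $\gamma/(\tau_0+1)\leq c_0$, and bridge $[0,\tau_0]$ using the elementary bound $\V(t)\geq(v_0^{-1}+\phi(x_0)t)^{-1}>0$, shrinking $m$ if needed) is sound.

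One slip: the left boundary of your region is misplaced. Since $\Dtt'=\frac{1}{\tau+1}\bigl(-\frac{1}{1-\alpha}\Dtt+\Vtt\bigr)$, on the face $\{\Dtt=\frac{m}{1-\alpha}\}$ with $\Vtt$ near $m$ you get $\Dtt'=\frac{1}{\tau+1}\bigl(\Vtt-\frac{m}{(1-\alpha)^2}\bigr)<0$ once $\alpha>0$, so trajectories can escape through the bottom-left corner and your claim that invariance of the left face is ``immediate'' is false as written. The correct choice, mirroring $\Dtbar=(1-\alpha)\Vtbar$ from the paper's proof of Theorem \ref{thm:case3}, is $B=[(1-\alpha)m,\infty)\times[m,\infty)$: then on the left face $\Dtt'=\frac{1}{\tau+1}(\Vtt-m)\geq0$. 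With that correction (and the corresponding adjustment $(1-\alpha)^{\alpha}\mapsto(1-\alpha)^{-\alpha}$ in the bottom-face slack condition and $\Dtb=(1-\alpha)m$ in the final unwinding), the argument goes through.
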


\section{Thin tail communications: Conditional flocking and alignment}\label{sec:thin}
In this section, we move to the case when the communication protocol has a thin tail, that is, $\phi$ satisfies \eqref{eq:alpha} with $\alpha>1$.

As stated in Theorem \ref{thm:Lyap}, a major feature of the thin tail communications is that the flocking and alignment are  \emph{conditional}, namely for a class of subcritical initial data. We will show the phenomenon using the method of invariant region. 

For $p\in(2,3)$, we obtain a subcritical region $S$, defined in \eqref{eq:subDV}, that greatly enlarges the area in \eqref{eq:Lyapsub}. We further show that that the flocking and alignment are \emph{semi-unconditional} (see Definition \ref{def:semi}). On the other hand, we also construct supercritical initial data that lead to no alignment. For $p>3$, we show that this is no alignment regardless of how small $\D_0$ and $\V_0$ are.

%

\subsection{Scenario 3: Flocking and alignment for subcritical initial data}
Let us start our discussion on the case when $p\in(2,3)$. We obtain the conditional flocking and alignment result.

\begin{theorem}\label{thm:case4}
Let $p\in(2,3)$ and $\phi$ satisfies \eqref{eq:alpha} with $\alpha>1$. Suppose $(\D,\V)$ satisfies \eqref{eq:ODI}. There exists a subcritical region $S\in\R_+\times\R_+$ such that for any $(\D_0,\V_0)\in S$, we have
\begin{equation}\label{eq:case4bound}
 \D(t)\leq \Dbar,\quad \V(t)\lesssim t^{-\frac{1}{p-2}}.
\end{equation}
\end{theorem}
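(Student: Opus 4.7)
}
The plan is to run the invariant-region argument from the proof of Theorem~\ref{thm:case2}, but carefully track what breaks in the thin-tail regime $\alpha>1$; the breakdown will produce, instead of an unconditional flocking statement, a quantitative subcritical region on $(\D_0,\V_0)$.

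First I will rescale only the velocity diameter. Fix a parameter $\beta\in(1,\beta_*)$ with $\beta_*=\tfrac{1}{p-2}$ and set $\Vt(t)=(t+1)^\beta\V(t)$. Inserting this into \eqref{eq:ODI}$_2$ and using $\phi(r)\geq\lambda r^{-\alpha}$ for $r\geq R$ gives
\begin{equation}
    \Vt'(t)\leq (t+1)^{-1}\bigl[\beta\Vt(t)-\lambda C(t+1)^{1-\beta(p-2)}\D(t)^{-\alpha}\Vt(t)^{p-1}\bigr],
\end{equation}
exactly as in Theorem~\ref{thm:case2}. I will then try to trap $(\D,\Vt)$ in a box $A=[0,\Dbar]\times[0,\Vtbar]$. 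The right-exit condition is the same as before, $\Dbar\geq\D_0+\Vtbar/(\beta-1)$, coming from $\D(t)\leq\D_0+\int_0^t(s+1)^{-\beta}\Vtbar\,ds$. For the top-exit condition I will reuse the cutoff-time trick: comparing $(t_c+1)^\beta\V_0$ (valid while $\V$ still sits near $\V_0$) with the curve on which the bracket above is non-positive, and picking $t_c^\ast$ to balance the two, yields
\begin{equation}
    \Vtbar=\frac{\V_0^{(\beta_\ast-\beta)(p-2)}}{(\lambda C)^\beta}\Dbar^{\alpha\beta}.
\end{equation}

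The decisive step is the combined algebraic inequality. Substituting this expression for $\Vtbar$ into the right-exit condition reduces the problem to finding $\Dbar>0$ with
\begin{equation}
    \D_0\leq f(\Dbar):=\Dbar-K(\V_0)\,\Dbar^{\alpha\beta},\qquad K(\V_0):=\frac{\V_0^{(\beta_\ast-\beta)(p-2)}}{(\beta-1)(\lambda C)^\beta}.
\end{equation}
This is where the thin tail regime behaves differently from Theorem~\ref{thm:case2}: since $\alpha>1$ and $\beta>1$, we have $\alpha\beta>1$, so $f$ is concave--dominant at infinity and attains a finite global maximum
\begin{equation}
    f(\Dbar^\ast)=\frac{\alpha\beta-1}{\alpha\beta}\bigl(\alpha\beta K(\V_0)\bigr)^{-1/(\alpha\beta-1)}
\end{equation}
at $\Dbar^\ast=(\alpha\beta K(\V_0))^{-1/(\alpha\beta-1)}$. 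A valid $\Dbar$ therefore exists precisely when $\D_0\leq f(\Dbar^\ast)$, which after rearrangement becomes
\begin{equation}
    \D_0\,\V_0^{\frac{(\beta_\ast-\beta)(p-2)}{\alpha\beta-1}}\leq c(p,\alpha,\beta,\lambda,C).
\end{equation}
Taking the union over $\beta\in(1,\beta_\ast)$ defines the subcritical region $S$. On $S$ the box $A$ is invariant, so $\D(t)\leq\Dbar$ in all time. This yields the flocking estimate; the sharp alignment rate $\V(t)\lesssim t^{-1/(p-2)}$ follows by invoking Theorem~\ref{thm:alpha0} with the positive lower bound $\phim:=\phi(\Dbar)$.

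The main obstacle is not the formal scheme, which parallels Theorem~\ref{thm:case2}, but justifying that the cutoff-time estimate $\Vt(t)\leq(t_c^\ast+1)^\beta\V_0$ is actually a valid upper bound on $\Vt$ throughout $[0,t_c^\ast]$ (this uses monotonicity of $\V$ from \eqref{eq:ODI}$_2$) and then verifying that the balance point $\Dbar^\ast$ indeed lies in the region where the lower bound \eqref{eq:alpha} applies. The degenerate range where $\D<R$ is handled separately using the fact that $\phi$ is bounded below by a positive constant on $[0,R]$, so the top-exit condition is strictly easier there. Optimizing $\beta$ over $(1,\beta_\ast)$ is what will ultimately produce the enlargement of Lyapunov's region \eqref{eq:Lyapsub} and, as $\beta\to\beta_\ast$, drive the exponent on $\V_0$ in the subcritical condition down to zero, foreshadowing the semi-unconditional claim.
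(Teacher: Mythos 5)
Your proposal follows essentially the same route as the paper's proof: rescale only $\V$ with a sub-optimal exponent $\beta\in(1,\beta_*)$, trap $(\D,\Vt)$ in a box via the cutoff-time argument inherited from Theorem \ref{thm:case2}, reduce existence of a valid $\Dbar$ to optimizing $\Dbar-K(\V_0)\Dbar^{\alpha\beta}$, take the union over $\beta$ to define $S$, and then invoke Theorem \ref{thm:alpha0} with $\phim=\phi(\Dbar)$ for the decay rate. Your optimized threshold agrees exactly with the paper's condition \eqref{eq:case4condition} (note $(\beta_*-\beta)(p-2)=1-\beta(p-2)$), so the argument is correct and matches the paper.
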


\begin{proof}
We follow the proof of Theorem \ref{thm:case2} until reaching the inequality \eqref{eq:Dbound3}.
Since $\alpha>1$ and $\beta>1$, there might not exist $\Dbar$ that satisfies \eqref{eq:Dbound3}. Indeed, if we view \eqref{eq:Dbound3} as 
\[f(\Dbar)=\D_0+\frac{\V_0^{1-\beta(p-2)}}{(\beta-1)(\lambda C)^\beta}\Dbar^{\alpha\beta}-\Dbar\leq0.	
\]
One can easily check that $f$ attains its minimum at
\[\Dbar=\left(\frac{(\beta-1)(\lambda C)^\beta}{\alpha\beta\V_0^{1-\beta(p-2)}}\right)^{\frac{1}{\alpha\beta-1}},\]
with
\[f_{\min}=f(\Dbar)=\D_0-(\alpha\beta-1)\left(\frac{(\beta-1)(\lambda C)^\beta}{(\alpha\beta)^{\alpha\beta}}\right)^{\frac{1}{\alpha\beta-1}}\V_0^{-\frac{1-\beta(p-2)}{\alpha\beta-1}}.\]
Therefore, if $(\D_0,V_0)$ satisfies
\begin{equation}\label{eq:case4condition}
	\D_0\V_0^{\frac{1-\beta(p-2)}{\alpha\beta-1}}\leq(\alpha\beta-1)\left(\frac{(\beta-1)(\lambda C)^\beta}{(\alpha\beta)^{\alpha\beta}}\right)^{\frac{1}{\alpha\beta-1}},
\end{equation}
then \eqref{eq:Dbound3} holds, and we have $\D(t)\leq\Dbar$. Applying Theorem \ref{thm:alpha0}, we conclude that $\V(t)\lesssim t^{-\frac{1}{p-2}}$.

Since the argument above works for any $\beta\in(1,\frac{1}{p-2})$, we can define the subcritical region $S$ as
\begin{equation}\label{eq:subDV}
 S = \left\{(\D_0,\V_0)\in\R_+\times\R_+ : \exists~\beta\in(1,\tfrac{1}{p-2}) \text{ such that \eqref{eq:case4condition} holds}\right\}.
\end{equation}
\end{proof}

\begin{figure}[ht]
 \includegraphics{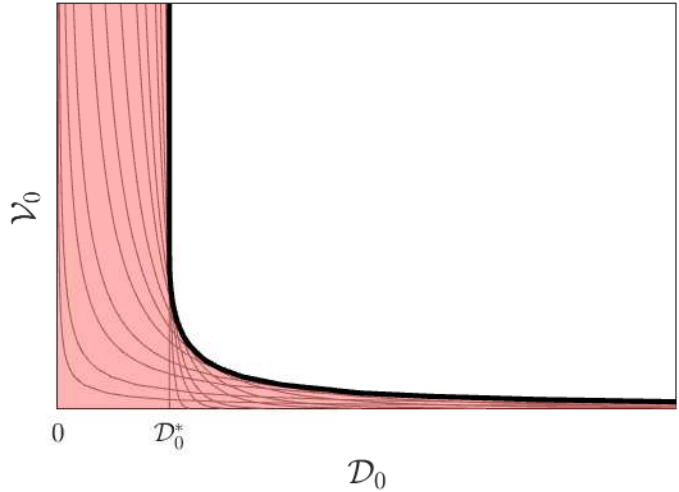}	
 \caption{An illustration of the subcritical region $S$}\label{fig:sub}
\end{figure}

Let us comment on the subcritical region $S$. Figure \ref{fig:sub} provides an illustration of $S$ in \eqref{eq:subDV}. It is a union of the regions in \eqref{eq:case4condition} by varying $\beta\in(1,\frac{1}{p-2})$. A surprising observation is: $(\D_0,\V_0)\in S$ as long as $\D_0<\D_0^*$, regardless of how big $\V_0$ is. We state the following proposition.
\begin{proposition}\label{prop:sub}
 The region $S$ defined in \eqref{eq:subDV} satisfies
 \[[0,\D_0^*)\times\R_+\subset S,\quad\text{where}\quad
 \D_0^*:=(\alpha\beta_*-1)\left(\frac{(\beta_*-1)(\lambda C)^{\beta_*}}{(\alpha\beta_*)^{\alpha\beta_*}}\right)^{\frac{1}{\alpha\beta_*-1}}.\]
\end{proposition}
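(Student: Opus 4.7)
The plan is to exploit a continuity argument as $\beta \nearrow \beta_* = \tfrac{1}{p-2}$, the right endpoint of the admissible range in the definition of $S$. The key observation is that the exponent of $\V_0$ in \eqref{eq:case4condition}, namely $\tfrac{1-\beta(p-2)}{\alpha\beta-1}$, vanishes at $\beta = \beta_*$ since $1 - \beta_*(p-2) = 0$. Consequently, the $\V_0$-dependence in \eqref{eq:case4condition} disappears in the limit, and the condition degenerates into a pure bound on $\D_0$, which turns out to be exactly $\D_0 < \D_0^*$.

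Concretely, fix any $\V_0 > 0$ and any $\D_0 \in [0, \D_0^*)$, and define
\[
F(\beta) := \D_0\,\V_0^{\frac{1-\beta(p-2)}{\alpha\beta-1}} \;-\; (\alpha\beta-1)\left(\frac{(\beta-1)(\lambda C)^{\beta}}{(\alpha\beta)^{\alpha\beta}}\right)^{\!\frac{1}{\alpha\beta-1}}.
\]
By \eqref{eq:subDV}, membership $(\D_0,\V_0) \in S$ is equivalent to the existence of some $\beta \in (1,\beta_*)$ with $F(\beta) \leq 0$. I would then verify continuity of $F$ on a left neighborhood of $\beta_*$: since $\alpha > 1$ and $\beta_* > 1$ (recall $p \in (2,3)$), we have $\alpha\beta_* - 1 > 0$ and $\beta_* - 1 > 0$, so all the base quantities $(\beta-1)$, $(\lambda C)^{\beta}$, $(\alpha\beta)^{\alpha\beta}$ and the exponents $\tfrac{1}{\alpha\beta-1}$, $\tfrac{1-\beta(p-2)}{\alpha\beta-1}$ are continuous at $\beta_*$.

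Evaluating at $\beta = \beta_*$ gives $\V_0^{\frac{1-\beta_*(p-2)}{\alpha\beta_*-1}} = \V_0^0 = 1$ for every $\V_0 > 0$, while the second term of $F$ is precisely $\D_0^*$ by its definition in the statement. Hence $F(\beta_*) = \D_0 - \D_0^* < 0$. By continuity, $F(\beta) < 0$ for all $\beta$ in some left half-neighborhood of $\beta_*$, in particular for some $\beta \in (1,\beta_*)$, which establishes $(\D_0,\V_0) \in S$.

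The only potential obstacle is routine: confirming that $\V_0^{\frac{1-\beta(p-2)}{\alpha\beta-1}} \to 1$ as $\beta \to \beta_*^-$ for every \emph{fixed} $\V_0 > 0$, which amounts to noting that the exponent tends to $0$ (the numerator vanishes linearly while the denominator stays bounded away from zero). This vanishing of the exponent is precisely the mechanism by which the upper bound on $\D_0$ becomes uniform in $\V_0$, explaining the strong semi-unconditional flavor of the conclusion.
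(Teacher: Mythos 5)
Your argument is correct and is exactly the route the paper takes (and leaves as a sketch): let $\beta\to\beta_*^-$ in \eqref{eq:case4condition}, note that the exponent $\frac{1-\beta(p-2)}{\alpha\beta-1}$ of $\V_0$ tends to $0$ so the left side tends to $\D_0$, and use continuity of the right side at $\beta_*$, where it equals $\D_0^*$. Your write-up merely supplies the continuity details the paper omits, so there is nothing to add.
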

The proposition can be proved by taking $\beta\to\beta_*$ in \eqref{eq:case4condition}, observing that the power of $\V_0$ becomes $\lim_{\beta\to\beta_*}\frac{1-\beta(p-2)}{\alpha\beta-1}=0$, and the right hand side of \eqref{eq:case4condition} is continuous in $\beta$. We omit the detailed proof.

Proposition \ref{prop:sub} implies \emph{semi-unconditional} flocking and alignment: if $\D_0\in(0,\D_0^*)$, for any $\V_0>0$, we apply Theorem \ref{thm:case4} and obtain flocking and alignment \eqref{eq:case4bound}. 

\begin{remark}
 The result can be extended to the linear case $p=2$. Indeed, we have $\beta_*=\infty$. When taking $\beta_*\to\infty$, we get $\D_0^*\to(\lambda C)^{1/\alpha}$ in Proposition \ref{prop:sub}. Hence, if $\D_0\in(0,(\lambda C)^{1/\alpha})$, for any $\V_0>0$, we obtain flocking and fast alignment. Therefore, we conclude that the asymptotic behaviors are semi-unconditional. 	
\end{remark}

\subsection{Scenario 3: No alignment for supercritical initial data}
In this part, we construct supercritical initial data that lead to no alignment, that is $\lim_{t\to\infty}\V(t)\neq0$. It indicates that the flocking and alignment are indeed conditional.

We use the two-particle initial configuration \eqref{eq:2particle}.

\begin{theorem}\label{thm:case4o}
Let $p\in(2,3)$ and $\phi$ satisfies \eqref{eq:alpha} with $\alpha>1$. Suppose $(\D,\V)$ satisfies \eqref{eq:ODE}. There exists a supercritical region $T\in\R_*\times\R_*$, for any $(x_0,v_0)\in T$, there exist $\Dtb>0$ and $\Vb>0$ such that 
\begin{equation}\label{eq:case1obound}
 \D(t)\geq\Dtb(t+1),\quad\text{and}\quad \V(t)\geq\Vb>0.
\end{equation}
\end{theorem}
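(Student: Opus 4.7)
The plan is to run a bootstrap contradiction argument directly on the ODE system \eqref{eq:ODE}. Since $\V'(t) = -\phi(\D(t))\V(t)^{p-1} \leq 0$, the velocity $\V$ is monotonically decreasing, so the entire task reduces to proving that under suitable supercriticality on $(x_0,v_0)$, $\V(t)$ remains above a fixed positive threshold $\Vb$. Once this is secured, integrating $\D'(t) = \V(t) \geq \Vb$ immediately yields $\D(t) \geq x_0 + \Vb t \geq \Dtb(t+1)$ with $\Dtb := \min\{x_0,\Vb\}$, which is the linear growth half of the claim.

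To produce the lower bound on $\V$, I would fix $\Vb := v_0/2$ and suppose for contradiction that $T > 0$ is the first time at which $\V(T) = \Vb$. On $[0,T]$, the bootstrap assumption $\V \geq \Vb$ forces $\D(t) \geq x_0 + \Vb t$. Since $p > 2$, I rewrite the velocity equation as $\frac{d}{dt}\V^{2-p} = (p-2)\phi(\D(t))$, integrate from $0$ to $T$, and invoke the thin-tail upper bound $\phi(r) \leq \Lambda r^{-\alpha}$ from \eqref{eq:alpha} (valid provided $x_0 \geq R$, which I build into the definition of the supercritical region). This gives
\begin{equation*}
\Vb^{2-p} \;=\; \V(T)^{2-p} \;\leq\; v_0^{2-p} + (p-2)\Lambda \int_0^T (x_0 + \Vb s)^{-\alpha}\,ds \;\leq\; v_0^{2-p} + \frac{(p-2)\Lambda}{(\alpha-1)\Vb}\,x_0^{1-\alpha},
\end{equation*}
where the integral is finite precisely because $\alpha > 1$. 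With $\Vb = v_0/2$, the desired contradiction $\Vb^{2-p} > \text{RHS}$ reduces to the explicit supercritical condition
\begin{equation*}
(2^{p-2}-1)\,v_0^{3-p}\,x_0^{\alpha-1} \;>\; \frac{2(p-2)\Lambda}{\alpha-1}.
\end{equation*}
I define $T \subset \R_+ \times \R_+$ as the set of $(x_0,v_0) \in [R,\infty) \times \R_+$ satisfying this inequality; since $3-p > 0$ and $\alpha-1 > 0$, this region is non-empty and unbounded in both variables. For any such initial data, the bootstrap closes, giving $\V(t) \geq v_0/2$ for all $t \geq 0$, and hence the stated linear growth with $\Dtb = \min\{x_0, v_0/2\}$.

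The main obstacle is the self-referential character of the estimate: the upper bound on $\int_0^T \phi(\D(s))\,ds$ depends on the threshold $\Vb$ that one is trying to prove. The choice $\Vb = v_0/2$ is what lets the loss factor $\Vb^{-1}$ in the integral combine cleanly with the gap $\Vb^{2-p} - v_0^{2-p}$ to produce the single supercritical marker $v_0^{3-p}x_0^{\alpha-1}$, which is the natural scaling invariant dictated by balancing the two equations in \eqref{eq:ODE}. The role of $\alpha > 1$ is essential: it guarantees $\int_1^\infty r^{-\alpha}\,dr < \infty$, so the cumulative alignment force felt by a particle escaping at linear speed is finite, permitting $\V$ to stay above $v_0/2$. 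Without this tail integrability, no choice of $\Vb$ could close the bootstrap. The result complements Theorem \ref{thm:case4}, confirming that in Scenario 3 the flocking/alignment dichotomy is genuinely conditional.
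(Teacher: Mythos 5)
Your argument is correct, and it reaches the same conclusion by a route that is structurally cousin to, but technically distinct from, the paper's. The paper rescales $\Dt(t)=(t+1)^{-1}\D(t)$ and runs an invariant-region argument on $(\Dt,\V)$ in $B=[\Dtb,\infty)\times[\Vb,\infty)$; the price is a self-consistency constraint coupling $\Dtb$ and $\Vb$, which is packaged as $f(\Dtb)\le 0$ in \eqref{eq:ffunc} and requires minimizing $f$ to identify when a solution exists, yielding the region $T$ via \eqref{eq:supv}--\eqref{eq:supx}. You instead exploit the monotonicity $\V'\le 0$ (available because \eqref{eq:ODE} is an equality system), fix the threshold $\Vb=v_0/2$ up front, and close a bootstrap by integrating $\frac{d}{dt}\V^{2-p}=(p-2)\phi(\D)$ along the lower bound $\D(t)\ge x_0+\Vb t$; the finiteness of $\int^\infty r^{-\alpha}dr$ for $\alpha>1$ then gives the clean explicit supercritical condition $(2^{p-2}-1)v_0^{3-p}x_0^{\alpha-1}>\frac{2(p-2)\Lambda}{\alpha-1}$, which is the natural complement of the subcritical marker $\D_0\V_0^{(3-p)/(\alpha-1)}$ in Theorem \ref{thm:Lyap}. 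Your computations check out (the integral bound, the passage to $(2^{p-2}-1)v_0^{3-p}x_0^{\alpha-1}$, and the conversion $x_0+\Vb t\ge\min\{x_0,\Vb\}(t+1)$), and you are more careful than the paper in noting that the upper bound $\phi(r)\le\Lambda r^{-\alpha}$ only holds for $r\ge R$, hence the requirement $x_0\ge R$ in your region. What your approach buys is simplicity and an explicit one-line description of a supercritical set; what it gives up is that the choice $\Vb=v_0/2$ is not optimized, so your region is generally smaller than what the paper's optimization over $\Dtb$ produces, and your method leans on the exact monotonicity of $\V$ rather than only on the differential inequalities, so it is slightly less robust as a template for the $p>3$ upgrade in Theorem \ref{thm:case5o}.
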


\begin{proof}
 We start with applying the following scaling on $\D$ to \eqref{eq:ODE}
 \[\Dt(t)=(t+1)^{-1}\D(t),\]
 and compute
\begin{align}
\Dt'(t)=&-(t+1)^{-2}\D(t)+(t+1)^{-1}\D'(t)=(t+1)^{-1} \big(-\Dt(t) +\V(t)\big),\label{eq:case4D}\\
 \V'(t)\geq & -\Lambda\D(t)^{-\alpha}\V(t)^{p-1}=-(t+1)^{-\alpha}\Lambda\Dt(t)^{-\alpha}\V(t)^{p-1}.\label{eq:case4V}
\end{align}

We will construct an invariant region
\[B = [\Dtb,\infty)\times[\Vb,\infty)\]
and argue that the dynamics of $(\Dt,\V)$ stays in $B$ in all time.

First, we check that $(\Dt,\V)$ can not exit from below. From \eqref{eq:case4V} we get
\[\V'(t)\geq -(t+1)^{-\alpha}\Lambda\Dtb^{-\alpha}\V(t)^{p-1},\]
as long as $\Dt(t)\geq\Dtb$. This can be further simplified using separation of variables
\[\V(t)\geq \left(\frac{1}{v_0^{p-2}}+\frac{(p-2)\Lambda\Dtb^{-\alpha}}{\alpha-1}\big(1-(t+1)^{-(\alpha-1)}\big)\right)^{-\frac{1}{p-2}}\geq\left(\frac{1}{v_0^{p-2}}+\frac{(p-2)\Lambda\Dtb^{-\alpha}}{\alpha-1}\right)^{-\frac{1}{p-2}}.\]
Therefore, if we pick 
\begin{equation}\label{eq:Dboundo}
 \Vb=\left(\frac{1}{v_0^{p-2}}+\frac{(p-2)\Lambda\Dtb^{-\alpha}}{\alpha-1}\right)^{-\frac{1}{p-2}}>0,
\end{equation}
then $\V(t)$ can not drop below $\Vb$.

Next, we check that $(\Dt,\V)$ can not exit to the left if
\begin{equation}\label{eq:Vboundo}
\Dtb\leq\min\{x_0,\Vb\}.	
\end{equation}
Indeed, if there exists a time $t_*$ such that $\Dt(t_*)=\Dtb$, $\Dt(t_*+)<\Dtb$ and $\V(t_*)\geq\Vb$. Then \eqref{eq:case4D} implies $\Dt'(t_*)\leq0$, which leads to a contradiction.

Conditions \eqref{eq:Dboundo} and \eqref{eq:Vboundo} guarantee that $(\Dt,\V)$ stays in the invariant region $B$. This directly implies \eqref{eq:case1obound}.

We are left to find $(\Dtb,\Vb)$ that satisfies \eqref{eq:Dboundo} and \eqref{eq:Vboundo}. Rewrite the conditions as
\begin{equation}\label{eq:ffunc}
f(\Dtb):=v_0^{-(p-2)}\Dtb^\alpha-\Dtb^{\alpha-(p-2)}+\frac{(p-2)\Lambda}{\alpha-1}\leq0\quad\text{and}\quad \Dtb\leq x_0.	
\end{equation}
The inequality $f(\Dtb)\leq0$ has a solution if and only if
\[f_{\min}=-\frac{p-2}{\alpha}\left(\frac{\alpha-(p-2)}{\alpha}\right)^{\frac{\alpha-(p-2)}{p-2}}v_0^{\alpha-(p-2)}+\frac{(p-2)\Lambda}{\alpha-1}\leq0,\]
or equivalently
\begin{equation}\label{eq:supv}
v_0\geq \left(\frac{\alpha\Lambda}{\alpha-1}\right)^{\frac{\beta_*}{\alpha\beta_*-1}}\left(\frac{\alpha\beta_*}{\alpha\beta_*-1}\right)^{\beta_*}.
\end{equation}
where the minimum of $f$ is achieved at
\begin{equation}\label{eq:Dboundoo}
\Dtb=\left(\frac{\alpha-(p-2)}{p-2}\right)^{\frac{1}{p-2}}v_0=(\alpha\beta_*-1)^{\beta_*}v_0.	
\end{equation}
To make sure $f(\Dtb)\leq0$ has a solution in $[0,x_0]$, we need $x_0$ to be large enough. A sufficient condition is
\begin{equation}\label{eq:supx}
	x_0\geq(\alpha\beta_*-1)^{\beta_*}v_0.
\end{equation}

Define the supercritical region
\begin{equation}\label{eq:supDV}
 T=\left\{(x_0,v_0)\in\R_+\times\R_+ :\text{ \eqref{eq:supv} and \eqref{eq:supx} holds}\right\}.	
\end{equation}
We conclude that if the initial data $(x_0,v_0)\in T$, then we can find $(\Dtb,\Vb)$ in \eqref{eq:Dboundoo} and \eqref{eq:Dboundo} respectively such that \eqref{eq:case1obound} holds.
\end{proof}

\subsection{Scenario 4: No alignment for generic data}
Now we turn our attention to the scenario when $p>3$. Observe that the Theorem \ref{thm:case4o} can be directly extended to any $p-2<\alpha$. On the other hand, if $p-2>\alpha$, we are able to obtain a stronger result.
\begin{corollary}\label{cor:case5o}
 Let $p>\alpha+2$ and $\phi$ satisfies \eqref{eq:alpha} with $\alpha>1$. Suppose $(\D,\V)$ satisfies \eqref{eq:ODE}. Then, for any initial data $x_0>0$ and $v_0>0$, there exist $\Dtb>0$ and $\Vb>0$ such that \eqref{eq:case1obound} holds.
\end{corollary}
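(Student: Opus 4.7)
The plan is to recycle the invariant region construction from the proof of Theorem \ref{thm:case4o} and exploit the sign change in the exponent $\alpha-(p-2)$ to remove the smallness restrictions on the initial data. First I would introduce the same rescaling $\Dt(t)=(t+1)^{-1}\D(t)$, which transforms \eqref{eq:ODE} into the paired relations \eqref{eq:case4D}--\eqref{eq:case4V}, and then look for an invariant region of the form $B=[\Dtb,\infty)\times[\Vb,\infty)$.

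Following the argument verbatim, the region $B$ traps $(\Dt(t),\V(t))$ as long as $(\Dtb,\Vb)$ satisfy the sufficient pair \eqref{eq:Dboundo}--\eqref{eq:Vboundo}, which reduces to the single scalar condition $f(\Dtb)\leq0$ together with $\Dtb\leq x_0$, as recorded in \eqref{eq:ffunc}. The crucial new observation is that under the hypothesis $p>\alpha+2$ the exponent $\alpha-(p-2)$ is negative, so the middle term $-\Dtb^{\alpha-(p-2)}=-\Dtb^{-(p-2-\alpha)}$ diverges to $-\infty$ as $\Dtb\to0^+$, while the first term $v_0^{-(p-2)}\Dtb^{\alpha}$ vanishes. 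Consequently $f(\Dtb)\to-\infty$ as $\Dtb\to0^+$, so for every fixed $v_0>0$ there is a threshold $\Dtb_\sharp>0$ with $f(\Dtb)\leq0$ on the whole interval $(0,\Dtb_\sharp]$. This is in sharp contrast with Theorem \ref{thm:case4o}, where $\alpha>p-2$ forced $f$ to have a strictly positive infimum unless $v_0$ was large.

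From here I would simply choose $\Dtb=\min\{x_0,\Dtb_\sharp\}>0$, define the corresponding $\Vb>0$ through \eqref{eq:Dboundo}, and invoke the invariance of $B$ to conclude $\Dt(t)\geq\Dtb$ and $\V(t)\geq\Vb$ for all $t\geq0$, which unwinds to \eqref{eq:case1obound}. Since the invariant region machinery is already in place from Theorem \ref{thm:case4o}, the only genuine content is the elementary asymptotic analysis of $f$ near the origin; the main (and essentially only) point to be careful about is verifying that $\Dtb_\sharp$, and hence $\Vb$, are strictly positive for arbitrary $(x_0,v_0)$, which is automatic once one exploits the sign of $\alpha-(p-2)$ together with the hypotheses $\alpha>1$ and $p>\alpha+2$.
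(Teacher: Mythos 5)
Your proposal is correct and follows essentially the same route as the paper: reduce to the scalar condition \eqref{eq:ffunc} via the invariant region of Theorem \ref{thm:case4o}, and use that $\alpha-(p-2)<0$ forces $f(\Dtb)\to-\infty$ as $\Dtb\to0^+$, so a valid $\Dtb=\min\{x_0,\Dtb_\sharp\}>0$ exists for arbitrary $(x_0,v_0)$. The paper additionally notes that $f$ is increasing with a unique root, but your continuity argument near the origin suffices.
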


\begin{proof}
 We follow the same proof in Theorem \ref{thm:case4o} and reach \eqref{eq:ffunc}. Note that $\alpha-(p-2)<0$. Therefore, we observe that $f$ is continuous and increasing in $(0,\infty)$, with
 \[\lim_{\Dtb\to0+}f(\Dtb)=-\infty,\quad\text{and}\quad\lim_{\Dtb\to\infty}f(\Dtb)=\infty.\]
Hence, $f$ has a unique root $\Dtb_*>0$, depending on $v_0$ and the parameters $\alpha, p, \Lambda$, such that $f(\Dtb)\leq0$ for any $\Dtb\in(0,\Dtb_*]$.
Then \eqref{eq:ffunc} is satisfied if we choose
\[\Dtb=\min\{\Dtb_*,x_0\}>0.\]
We further choose $\Vb$ according to \eqref{eq:Dboundo}. The same argument in Theorem \ref{thm:case4o} leads to \eqref{eq:case1obound}.
\end{proof}

Corollary \ref{cor:case5o} indicates that the supercritical region $T=\R_+\times\R_+$. So there is \emph{no alignment} for generic data, no matter how small the initial data are.

A natural question is on the case where $3<p\leq \alpha+2$: whether there is no alignment for all data, or there exists subcritical region that leads to alignment. The following theorem gives a comprehensive answer.
\begin{theorem}\label{thm:case5o}
 Let $p>3$ and $\phi$ satisfies \eqref{eq:alpha} with $\alpha>1$. Suppose $(\D,\V)$ satisfies \eqref{eq:ODE}. Then, for any initial data $x_0>0$ and $v_0>0$, there exist $\Dtb>0$ and $\Vb>0$ such that \eqref{eq:case1obound} holds.
\end{theorem}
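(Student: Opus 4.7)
The plan is to exploit the exact integrability of the two-particle ODE \eqref{eq:ODE}. Unlike the differential inequality \eqref{eq:ODI}, here both equations hold with equality, so time can be eliminated by using $\D$ as the independent variable, turning the system into a separable scalar ODE. Equivalently, the Lyapunov functional $\mathcal{E}(t)=\V^{3-p}(t)+(3-p)\int_{x_0}^{\D(t)}\phi(s)\,ds$ of Section \ref{sec:Lyap} is exactly conserved along solutions of \eqref{eq:ODE}; since $3-p<0$, conservation produces a lower bound on $\V$ rather than an upper bound on $\D$.

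First, using $\D'=\V>0$ one obtains $\frac{d\V}{d\D}=\V'/\D'=-\phi(\D)\V^{p-2}$, a separable equation whose integration yields the identity
\[\V^{3-p}(t)=v_0^{3-p}+(p-3)\int_{x_0}^{\D(t)}\phi(s)\,ds,\qquad\forall\,t\geq0.\]
Since $\phi$ satisfies \eqref{eq:alpha} with $\alpha>1$, the tail $\int_{x_0}^\infty\phi(s)\,ds$ is finite, so the right-hand side is bounded above uniformly in $t$ by
\[K:=v_0^{3-p}+(p-3)\int_{x_0}^{\infty}\phi(s)\,ds<\infty.\]

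Second, because $p>3$ and hence $3-p<0$, the upper bound $\V^{3-p}(t)\leq K$ translates into a positive lower bound
\[\V(t)\geq \Vb:=K^{-1/(p-3)}>0,\]
valid for all $t\geq0$ and for \emph{every} initial data $(x_0,v_0)\in\R_+\times\R_+$. Third, integrating $\D'=\V\geq\Vb$ gives $\D(t)\geq x_0+\Vb\,t\geq \Dtb(t+1)$ with $\Dtb:=\min\{x_0,\Vb\}>0$, completing \eqref{eq:case1obound}.

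The main conceptual obstacle, and the reason Corollary \ref{cor:case5o} cannot be invoked directly, is the regime $\alpha\geq p-2$: the static invariant region $[\Dtb,\infty)\times[\Vb,\infty)$ used in Theorem \ref{thm:case4o} fails to cover small initial data, because the compatibility inequality \eqref{eq:ffunc} forces $v_0$ above a positive threshold. Switching from the invariant-region viewpoint to the conserved-quantity viewpoint bypasses this obstruction in a single step, exploiting the thin-tail condition $\alpha>1$ (integrability of $\phi$ at infinity) and the strong nonlinearity $p>3$ (for the sign of $3-p$) simultaneously, and automatically recovering Corollary \ref{cor:case5o} as a special case.
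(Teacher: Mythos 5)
Your proof is correct, and it takes a genuinely different route from the paper. The paper proves Theorem \ref{thm:case5o} by upgrading the invariant-region argument of Theorem \ref{thm:case4o}: it rescales $\D$ with a sub-optimal exponent $\gamma\in(\frac1\alpha,1)$ chosen so that $\alpha-\frac{p-2}{\gamma}<0$, runs a delayed invariant-region argument (optimizing over the crossing time $t_c$) to force $\V(t)\geq\Vb$ for arbitrary $(x_0,v_0)$, and then integrates $\D'=\V$ to get linear growth. You instead observe that along solutions of \eqref{eq:ODE} the functional $\V^{3-p}+(3-p)\int_{x_0}^{\D}\phi(s)\,ds$ is exactly conserved, and that for $p>3$ the sign of $3-p$ reverses the role it plays in Section \ref{sec:Lyap}: combined with the integrability of $\phi$ at infinity ($\alpha>1$, plus monotonicity of $\phi$ to handle $[x_0,R]$), conservation yields the uniform bound $\V^{3-p}(t)\leq K$ and hence the explicit lower bound $\V(t)\geq K^{-1/(p-3)}$ in one step; the linear growth of $\D$ then follows as in the paper. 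Your argument is shorter, gives a clean closed-form $\Vb$, and makes transparent why no smallness of $(x_0,v_0)$ can restore alignment, whereas the paper's method is the uniform technique used throughout (and is the natural companion to Theorem \ref{thm:case4o} and Corollary \ref{cor:case5o}, which your identity indeed subsumes). One small point worth making explicit: the separation of variables presupposes $\V(t)>0$ so that $\D$ is strictly increasing and $\V^{3-p}$ is finite; this is immediate from $\V'\geq-\phi(x_0)\V^{p-1}$ as in the proof of Theorem \ref{thm:case2o}, so there is no circularity, but a sentence to that effect would close the loop.
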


The proof of Theorem \ref{thm:case5o} requires an upgrade to Theorem \ref{thm:case4o}. We use a similar idea as in the proof of Theorem \ref{thm:case2}: start with a sub-optimal scaling on $\D$ and show $\D(t)\geq (t+1)^{\gamma}$ for some $\gamma\in(\frac{1}{\alpha},1)$. This will lead to no alignment: $\V(t)\geq\Vb$. Then we can obtain the optimal growth on $\D$.

\begin{proof}[Proof of Theorem \ref{thm:case5o}]
 Given any $\gamma\in(\frac{1}{\alpha},1)$, we rescale $\D$ and define
 \[\Dt(t)=(t+1)^{-\gamma}\D(t).\]
 We will construct an invariant region
 \[B=[\Dtb,\infty)\times[\Vb,\infty),\]
 and show $(\Dt,\V)$ stays in $B$ in all time.
 
 To check $(\Dt,\V)$ can not exit from below, we compute
 \[\V'(t)\geq-(t+1)^{-\gamma\alpha}\Lambda\Dtb^{-\alpha}\V(t)^{p-1},\]
 and it implies
 \begin{equation}\label{eq:Vcondo}
\V(t)\geq\left(\frac{1}{v_0^{p-2}}+\frac{(p-2)\Lambda\Dtb^{-\alpha}}{\gamma\alpha-1}\right)^{-\frac{1}{p-2}}=:\Vb. 	
 \end{equation}
 Clearly, $\V$ can not drop below $\Vb$ (defined as the right hand side of the above inequality).
 
 Next, we focus on the condition that ensures $(\Dt,\V)$ can not exit to the left of $B$. Compute
 \[\Dt'(t)=-\frac{\gamma}{t+1}\Dt(t)+\frac{1}{(t+1)^\gamma}\V(t)\geq-\frac{\gamma}{t+1}\big(\Dt(t)-(t+1)^{1-\gamma}\Vb\big).\]
 Fix any $t_c>0$. Define
 \[\Dtb_{t_c}=(t_c+1)^{1-\gamma}\Vb.\]
 Then for any $t\geq t_c$, we have
 \[-\frac{\gamma}{t+1}\big(\Dtb_{t_c}-(t+1)^{1-\gamma}\Vb\big)\geq0.\]
 Therefore, $\Dt(t)$ can not exit $[\Dtb_{t_c},\infty)$ after time $t_c$.
 
 To control $\Dt(t)$ before time $t_c$, we apply the rough bound $\D(t)\geq x_0$. Then
 \[\Dt(t)\geq (t+1)^{-\gamma}x_0\geq (t_c+1)^{-\gamma}x_0.\]
 
 We pick the optimal $t_c=t_c^*$ where
 \[t_c^*+1=\frac{x_0}{\Vb},\]
 such that $(t_c+1)^{-\gamma}x_0=\Dtb_{t_c}$. Then the argument above implies that $(\Dt,\V)$ can not exit to the left of the invariant region $B$ if we pick
 \begin{equation}\label{eq:Dcondo}
  \Dtb\leq x_0^{1-\gamma}\Vb^{-\gamma}.	
 \end{equation}
 
 We are left to find $(\Dtb,\Vb)$ that satisfies \eqref{eq:Vcondo} and \eqref{eq:Dcondo}. Rewrite the conditions as
\begin{equation}\label{eq:ffunc2}
f(\Dtb):=v_0^{-(p-2)}\Dtb^\alpha-x_0^{-\frac{(1-\gamma)(p-2)}{\gamma}}\Dtb^{\alpha-\frac{p-2}{\gamma}}+\frac{(p-2)\Lambda}{\gamma\alpha-1}\leq0.
\end{equation}
Note that \eqref{eq:ffunc2} reduces to \eqref{eq:ffunc} if $\gamma=1$. The major gain here is that we can choose $\gamma\in(\frac{1}{\alpha},\frac{p-2}{\alpha})$ such that the power of the second term
\[\alpha-\frac{p-2}{\gamma}<0.\]
Then we use the same argument in Proposition \ref{prop:sub} and pick $\Dtb$ to be the root of $f(\Dtb)=0$, which depends on $x_0, v_0$ and parameters $p,\alpha,\Lambda,\gamma$. One can check that $\Dtb>0$, for any choice of $x_0>0$ and $v_0>0$.
Choosing $\Vb$ according to \eqref{eq:Vcondo}, we conclude that $\V(t)\geq\Vb$.

Once we obtain the lower bound on $\V$, we immediately have
\[\D(t)=x_0+\int_0^t\V(s)\,ds\geq x_0+\Vb t.\]
This concludes the proof of \eqref{eq:case1obound}, with $\Dtb=\min\{x_0,\Vb\}$.
\end{proof}

\bibliographystyle{plain}
\bibliography{bib_NonlinearEA}

\end{document}